\documentclass[11pt]{article}
\usepackage[T1]{fontenc}
\usepackage[margin=1in]{geometry}
\usepackage{amsmath,amssymb,amsthm,mathtools}
\usepackage{graphicx}
\usepackage{enumitem}
\usepackage[numbers,sort&compress]{natbib}
\usepackage[hidelinks]{hyperref}
\hypersetup{
 pdftitle={Yau's number-theoretic conjecture and sharp spectral bounds for quantum harmonic oscillators},
 pdfauthor={Zhiwen Liu, Stephen S.-T. Yau, Hao Zuo},
 pdfsubject={Sharp polynomial estimates and harmonic-oscillator applications},
 pdfkeywords={Yau conjecture, lattice points, harmonic oscillator, spectral remainder, smooth numbers}
}
\setlist[itemize]{leftmargin=1.5em,itemsep=0.25em,topsep=0.3em}
\numberwithin{equation}{section}
\allowdisplaybreaks[1]
\newtheorem{theorem}{Theorem}[section]
\newtheoremstyle{mainheading}
 {\topsep}{\topsep}{\itshape}{}{\bfseries}{.}{.5em}
 {\thmname{#1}\thmnote{ (#3)}}
\theoremstyle{mainheading}
\newtheorem{maintheorem}[theorem]{Main theorem}
\theoremstyle{plain}
\newtheorem{proposition}[theorem]{Proposition}
\newtheorem{lemma}[theorem]{Lemma}
\newtheorem{corollary}[theorem]{Corollary}
\theoremstyle{definition}

\newtheorem{example}[theorem]{Example}
\theoremstyle{remark}
\newtheorem{remark}[theorem]{Remark}
\newcommand{\MainTheoremRef}{\hyperref[thm:yau-all]{main theorem}}
\newcommand{\R}{\mathbb R}
\newcommand{\Z}{\mathbb Z}
\newcommand{\fall}[2]{\bigl(#1\bigr)_{\!\underline{#2}}}

\title{Yau's number-theoretic conjecture and sharp spectral bounds for quantum harmonic oscillators}
\author{Zhiwen Liu$^1$ \quad Stephen S.-T. Yau$^2$\thanks{Corresponding author: \href{mailto:yau@uic.edu}{yau@uic.edu}.} \quad Hao Zuo$^1$\\[0.7em]
\small $^1$Beijing Institute of Mathematical Sciences and Applications,\\
\small Beijing 100084, China\\[0.3em]
\small $^2$Department of Mathematical Sciences, Tsinghua University,\\
\small Beijing 100084, China}
\date{}

\begin{document}
\maketitle
\begin{abstract}
We prove Yau's number-theoretic conjecture on positive integral points in
real right simplices in every dimension $n\ge3$, and apply the resulting
sharp polynomial bound to quantum harmonic oscillators. The lattice-point bound holds for every choice of positive real
intercepts for which the count is nonzero, with equality precisely
when all intercepts equal the same integer. For
oscillators in dimensions $n\ge3$, we obtain explicit upper bounds for
eigenvalue counts and spectral remainders for arbitrary positive
frequencies at every threshold at or above the lowest eigenvalue.
For frequencies with rational ratios in dimensions $n\ge2$, an additional lattice coordinate and
interpolation give bounds for energy sums, or first Riesz means, at every
threshold. When all frequencies are equal (the isotropic case), the
oscillator attains these bounds: the counting
bound is exact at its energy levels, and interpolation on its natural
energy grid recovers the energy sum at every threshold. In three
dimensions, the exact formula displays the periodic contribution
associated with the resonance discussed by Fefferman and Seco. The proof
builds on the slicing method of Yau and Zhang.
Arithmetic
and geometric consequences include explicit bounds for smooth numbers
and lattice-point discrepancies.

\end{abstract}
\medskip
\noindent\textbf{Keywords.} Yau conjecture; lattice points; harmonic oscillator;
spectral remainder; smooth numbers.

\section{Introduction}
\label{sec:status}

Quantum harmonic oscillators connect discrete energy levels with lattice
points. Their energy levels are the eigenvalues, and a standard orthonormal
basis of energy eigenfunctions is indexed by nonnegative integer vectors.
We use this correspondence to bound quantum state counts, that is,
eigenvalue counts with multiplicity, and cumulative energy differences
relative to a threshold. For equal frequencies, the bounds are attained,
and exact formulas show how resonance produces periodic corrections
to the classical phase-space approximation.

The mathematical input is Yau's number-theoretic conjecture on positive
integer points in real right simplices. It gives a polynomial upper
bound for each simplex containing a positive integer point and specifies
exactly when equality holds. We prove it in every dimension $n\ge3$.

\paragraph*{The conjecture and the main result.}
Let
\begin{equation}
\label{eq:simplex}
 T(a_1,\ldots,a_n)
 =\left\{x\in\R_{\ge0}^n:
          \sum_{i=1}^n\frac{x_i}{a_i}\le1\right\},
 \qquad a_1\ge\cdots\ge a_n>1.
\end{equation}
The numbers $a_i$ are the intercepts on the coordinate axes.
Let $P_n(a_1,\ldots,a_n)$ count its positive integer points, including
those on the sloping boundary. Throughout, we call a simplex or slice nonempty if it contains
a positive integer point.

The starting point of our proof is the product estimate of Yau and Zhang
\cite{YauZhang2006}:
\begin{equation}
\label{eq:rough}
 n!P_n(a_1,\ldots,a_n)\le\prod_{i=1}^n(a_i-1),
 \qquad n\ge3.
\end{equation}
For equal integer intercepts $a_i=N\ge n$, elementary counting gives
$n!P_n(N,\ldots,N)=N(N-1)\cdots(N-n+1)$.
The difference from the product in \eqref{eq:rough} defines the correction
\begin{equation}
\label{eq:correction}
 \fall{v}{q}:=\prod_{j=0}^{q-1}(v-j),
 \qquad
 p_q(v):=(v-1)^q-\fall{v}{q},
 \qquad q\ge1.
\end{equation}
The conjectured polynomial bound is
\begin{equation}
\label{eq:yau-polynomial}
 g_n(a_1,\ldots,a_n)
 :=\prod_{i=1}^n(a_i-1)-p_n(a_n).
\end{equation}
For fixed $n$, let $r$ be a polynomial in one variable. If the bound
$n!P_n\le\prod_i(a_i-1)-r(a_n)$ is attained whenever all intercepts
equal an integer $N\ge n$, then $r=p_n$.

We prove Yau's conjecture \cite{YauYuanZuo2016} in every dimension $n\ge3$.
\begin{maintheorem}
\label{thm:yau-all}
Let $n\ge3$ and let $a_1\ge\cdots\ge a_n>1$ be real numbers.
If $P_n(a_1,\ldots,a_n)>0$, then
\begin{equation}
\label{eq:yau-proved}
 n!P_n(a_1,\ldots,a_n)\le g_n(a_1,\ldots,a_n).
\end{equation}
Equality holds if and only if
$a_1=\cdots=a_n=N$ for an integer $N\ge n$.
\end{maintheorem}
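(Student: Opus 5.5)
The plan is to induct on $n$ using the slicing method of Yau and Zhang. First I will settle the base case $n=3$, then pass from $n-1$ to $n$ by slicing along the coordinate with the \emph{largest} intercept $a_1$. Fixing $x_1=k$ with $1\le k\le\lfloor a_1(1-1/a_2-\cdots-1/a_n)\rfloor$, the slice is the simplex $T(a_2(1-k/a_1),\ldots,a_n(1-k/a_1))$ in dimension $n-1$, and
\[
 P_n(a_1,\ldots,a_n)=\sum_{k\ge1}P_{n-1}\bigl(a_2t_k,\ldots,a_nt_k\bigr),\qquad t_k=1-k/a_1 .
\]
Only nonempty slices contribute. To each of them I apply the inductive bound $(n-1)!P_{n-1}\le g_{n-1}$, using that the smallest scaled intercept is $a_nt_k$. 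For slices that drop into dimension $2$, or sit just above a degenerate case, I use the rough bound \eqref{eq:rough} or direct counting instead.

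The induction step then reduces to an inequality between a sum and a polynomial:
\[
 n\sum_{k=1}^{K}\Bigl[\prod_{i=2}^n(a_it_k-1)-p_{n-1}(a_nt_k)\Bigr]\le\prod_{i=1}^n(a_i-1)-p_n(a_n),
\]
where $K$ is the index of the last nonempty slice.

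I will compare the sum with the integral $a_1\int_0^{1}\prod_{i\ge2}(a_it-1)\,dt$ by an Euler--Maclaurin or trapezoid argument. The factors $(a_it-1)$ are convex-type products, so this comparison costs a boundary term of size about $\tfrac12\prod_{i\ge2}(a_i-1)$. That term is what turns $\prod_{i\ge2}(a_i-1)\cdot a_1$ into $\prod_{i\ge1}(a_i-1)$, which is exactly how the Yau--Zhang bound \eqref{eq:rough} is obtained. The correction sum $\sum_k p_{n-1}(a_nt_k)$ must then supply $p_n(a_n)$. This works exactly when $a_1=\cdots=a_n=N$, via the identity $\fall{N}{n}=n\sum_{k}\fall{N-k}{n-1}$, the hockey-stick identity that makes equality possible. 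The polynomial $p_q(v)=(v-1)^q-\fall{v}{q}$ has nonnegative coefficients in the shifted variable for $v\ge q$. This positivity should let me bound the discrete sums by the corresponding integrals with controlled error.

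Before any of this I will reduce to a normalized situation. Decreasing $a_1,\ldots,a_{n-1}$ to the nearest values at which a lattice point leaves the simplex does not change $P_n$. It also decreases the right side monotonically, because $g_n$ is increasing in $a_1,\ldots,a_{n-1}$. So it suffices to treat ``critical'' intercepts, where the sloping face passes through lattice points. For the equality case, I will track where each inequality in the chain is strict. Strictness in the slice bounds (by induction) or in the sum-versus-integral comparison forces all $t_ka_i$ to be integers and all $a_i$ to be equal. Equality at $a_i=N$ then follows from the elementary count.

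\textbf{Main obstacle.} The hard step is controlling the correction terms when the $a_i$ are unequal and non-integral, and in particular the region where $a_n$ is small (close to $n$). There $p_{n-1}(a_nt_k)$ may be evaluated below $n-1$ and can even have the wrong sign, and the last few slices are nearly empty. I would first try splitting the slice range into slices with $a_nt_k\ge n-1$, which are handled by induction plus a convexity comparison, and the final boundary slices. For the boundary slices I would use the rough estimate together with an explicit count, since they contain very few points. The base case $n=3$ will need a separate, more hands-on argument along the same lines, with the two-dimensional slices counted exactly via the floor function.
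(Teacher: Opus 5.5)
There is a genuine gap. Your plan correctly reduces the induction step to the slice inequality $n\sum_{k=1}^{K}g_{n-1}(a_2t_k,\ldots,a_nt_k)\le g_n(a_1,\ldots,a_n)$, but it never proves it. What you call the main obstacle is the whole content of the theorem, and the tools you list do not close it.

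The sum-versus-integral heuristic is also misaligned. For equal intercepts, $n\,a\int_{1/a}^1(at-1)^{n-1}dt=(a-1)^n$ already, so the trapezoid boundary term is extra slack, not what turns $a_1$ into $a_1-1$. The Yau--Zhang factor comes instead from an exact discrete power-sum inequality, $(d+1)\sum_{k=1}^M\prod_h(t-k-\rho_h)<(t-1)\prod_h(t-\rho_h)$ for roots in $[0,t-M]$. Euler--Maclaurin remainders at the fractional endpoint of the last nonempty slice have no definite sign.

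More seriously, the product part and the correction part cannot be controlled separately. In the paper's normalization, the equal-intercept part of the residual is \emph{negative} on the admissible simplex $(7,7,7/2,7/2)$, with core $C_M=-15/16$; positivity comes only from the unequal-intercept terms. So an argument in which the correction sum ``supplies $p_n(a_n)$'' through positivity of $p_q$ and convexity has no mechanism when the intercepts are unequal and the final slices have smallest intercept below $n-1$. Your fallback, the rough bound plus an ``explicit count'' on boundary slices, does not supply one. There is no explicit count for slices with arbitrary real intercepts, and you would still need to show that the accumulated slack covers $p_n(a_n)$.

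The missing idea is a quantitative use of the fact that the last slice is nonempty. The paper slices along the smallest intercept and writes the residual as $\Delta_M=C_M+\sum_j e_j(\eta)L_{j,M}$, with $L_{j,M}>0$ by Lemma~\ref{lem:root-lifting}. It then uses the reciprocal constraint $\sum_iA_i^{-1}\le1$ on the last slice, through $\prod_i(A_i-1)\ge\Phi_d(z)$ (Lemma~\ref{lem:fixed-minimum}) and the ordering $R_{1,M}>\cdots>R_{d-1,M}$. This gives $\Delta_M\ge C_M+\Lambda_{d,M}(x,z)\{\Phi_d(z)-(z-1)^d\}$, a function of $(d,M,x,z)$ alone. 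Falling-factorial estimates then show this bound is nonnegative at $M=1$ and nondecreasing in $M$.

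Two further points. First, the base case $n=3$ is the Xu--Yau theorem, and its range $1<a_3<2$ is exactly what the induction needs; it is not a routine hands-on check. Second, \eqref{eq:rough} is false in dimension two: $(a_1,a_2)=(100,5/2)$ gives $2P_2=160>148.5=(a_1-1)(a_2-1)$. So you cannot invoke it for two-dimensional slices.
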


The condition $P_n>0$ is equivalent to
$\sum_i a_i^{-1}\le1$: every positive integer vector has each coordinate
at least one. The smallest intercept of a simplex satisfying this condition
may be arbitrarily close to one. For example,
the simplex with
\[
 a_1=\cdots=a_{n-1}=\frac{(n-1)t}{t-1},
 \qquad a_n=t,\qquad 1<t<n,
\]
contains exactly one positive integer point. The ratio of its largest
to its smallest intercept tends to infinity as $t\downarrow1$.

When all intercepts equal $n$, the simplex contains only the positive
integer point $(1,\ldots,1)$. The product estimate gives the upper bound
$(n-1)^n/n!$, whereas the \MainTheoremRef{} gives the exact value $1$.

The cases in dimensions three through eight were established in
\cite{XuYau1992,XuYau1996,LuoYauZuo2015,LinLuoYauZuo2014,
LiangYauZuo2016,YauYuanZuo2016,LinYangYauZuo2016}.
Some earlier proofs used a different polynomial, called the sharp GLY
estimate. Wang and Yau \cite{WangYau2007} found a counterexample to its
original range in dimension seven and proposed a revised range.
Our proof builds on the slicing method of Yau and Zhang
\cite{YauZhang2006} and its development by Yau, Yuan, and Zuo
\cite{YauYuanZuo2016}.
The reciprocal constraint on the last nonempty slice gives a quantitative
lower bound for the contribution of unequal intercepts. We use these terms to compensate for any negative value of the
equal-intercept part. Falling-factorial estimates then close
the induction.

\paragraph*{Spectral applications.}
Fefferman and Seco \cite[pp.~XI-4--XI-5]{FeffermanSeco1995} study
accurate sums of negative eigenvalues. They use the harmonic oscillator
to show how resonance affects lower-order spectral terms: an
oscillatory contribution can have the same order as a proposed
correction. C\'ordoba, Fefferman, and Seco
\cite[pp.~165--169]{CordobaFeffermanSeco1995} relate such questions to
lattice-point errors. We apply the \MainTheoremRef{} to the oscillator
example and its anisotropic extension, which allows unequal frequencies.

For $n\ge2$ and $0<\omega_1\le\cdots\le\omega_n$, consider the
self-adjoint harmonic oscillator on $L^2(\R^n)$,
\begin{equation}
\label{eq:oscillator}
 H_{\boldsymbol\omega}
 =\frac12\sum_{i=1}^n
   \left(-\frac{\partial^2}{\partial x_i^2}+\omega_i^2x_i^2\right),
 \qquad E_0=\frac12\sum_{i=1}^n\omega_i.
\end{equation}
Here $E_0$ is the ground-state energy, the lowest eigenvalue of
$H_{\boldsymbol\omega}$. The energy levels are
\[
 E_{\boldsymbol m}=E_0+\sum_{i=1}^n\omega_i m_i,
 \qquad \boldsymbol m\in\Z_{\ge0}^n.
\]
For each $i$, let $\phi_{m_i,\omega_i}$ be the normalized
eigenfunction of the one-dimensional oscillator
$\frac12(-d^2/dx_i^2+\omega_i^2x_i^2)$ with eigenvalue
$\omega_i(m_i+1/2)$. The functions
\[
 \psi_{\boldsymbol m}(x_1,\ldots,x_n)
 =\prod_{i=1}^n\phi_{m_i,\omega_i}(x_i),
 \qquad \boldsymbol m\in\Z_{\ge0}^n,
\]
form an orthonormal basis of $L^2(\R^n)$ consisting of
eigenfunctions of $H_{\boldsymbol\omega}$ with eigenvalues
$E_{\boldsymbol m}$.
The multiplicity of an eigenvalue $e$ is the number of indices satisfying
$E_{\boldsymbol m}=e$. Thus $\mathcal N_{\boldsymbol\omega}(E)$ counts
the indices with $E_{\boldsymbol m}\le E$, or equivalently the eigenvalues
at most $E$, with multiplicity. Shifting each $m_i$ by one gives
\begin{equation}
\label{eq:spectral-identity}
 \mathcal N_{\boldsymbol\omega}(E)
 =P_n\left(\frac{E+E_0}{\omega_1},\ldots,
           \frac{E+E_0}{\omega_n}\right),
 \qquad E\ge E_0.
\end{equation}
In the phase space $\R_x^n\times\R_\xi^n$, $x$ is position
and its Fourier-dual variable $\xi$ represents momentum.
The normalized volume below classical energy $E$ is
\[
 \mathcal W_{\boldsymbol\omega}(E)
 =\frac{\operatorname{Vol}_{2n}\!\left\{(x,\xi)\in\R^{2n}:
 \frac12\sum_i(\xi_i^2+\omega_i^2x_i^2)\le E\right\}}{(2\pi)^n}
 =\frac{E^n}{n!\prod_i\omega_i},\qquad E\ge0.
\]
Appendix~\ref{si:spectral-background} gives the
eigenfunctions, a multiplicity example, and the volume calculation.
The lattice-point description of oscillator spectra is classical; see,
for example, \cite{Larson2019}. The \MainTheoremRef{} gives the following
bound, with the Yau correction and a complete equality condition.

\begin{corollary}
\label{cor:spectral-counting}
Let $n\ge3$ and $0<\omega_1\le\cdots\le\omega_n$. For $E\ge E_0$, set
\begin{equation}
\label{eq:spectral-polynomial}
 \mathcal B_{n,\boldsymbol\omega}(E)
 =\frac1{n!}\left[
   \prod_{i=1}^n\left(\frac{E+E_0}{\omega_i}-1\right)
   -p_n\left(\frac{E+E_0}{\omega_n}\right)\right].
\end{equation}
Then
\begin{equation}
\label{eq:spectral-bound}
 \mathcal N_{\boldsymbol\omega}(E)
 \le\mathcal B_{n,\boldsymbol\omega}(E).
\end{equation}
Equality holds if and only if all frequencies equal a common
$\omega$ and $E=\omega(M+n/2)$ for an integer $M\ge0$.
Moreover,
\begin{equation}
\label{eq:spectral-remainder}
 \mathcal N_{\boldsymbol\omega}(E)-\mathcal W_{\boldsymbol\omega}(E)
 \le
 \mathcal B_{n,\boldsymbol\omega}(E)-\mathcal W_{\boldsymbol\omega}(E),
\end{equation}
and the expression on the right is a polynomial of degree $n-1$.
\end{corollary}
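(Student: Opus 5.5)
The plan is to reduce everything to the \MainTheoremRef{} through the spectral identity \eqref{eq:spectral-identity}. Fix $E\ge E_0$ and set $a_i=(E+E_0)/\omega_i$. Since $\omega_1\le\cdots\le\omega_n$, we have $a_1\ge\cdots\ge a_n$. Two hypotheses need checking: $a_n>1$, and $P_n(a_1,\ldots,a_n)>0$. For the second, note that
\[
 \sum_i a_i^{-1}=\sum_i\frac{\omega_i}{E+E_0}=\frac{2E_0}{E+E_0}\le1
\]
exactly when $E\ge E_0$. By the remark after the \MainTheoremRef{}, this gives $P_n>0$; indeed $(1,\ldots,1)$ is a positive integer point of the simplex. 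For the first, $a_n>1$ is equivalent to $E+E_0>\omega_n$. This holds because $E+E_0\ge 2E_0=\sum_i\omega_i>\omega_n$, using $n\ge2$ and positivity of the frequencies. The \MainTheoremRef{} then gives $n!\,\mathcal N_{\boldsymbol\omega}(E)=n!P_n\le g_n(a_1,\ldots,a_n)=n!\,\mathcal B_{n,\boldsymbol\omega}(E)$, which is \eqref{eq:spectral-bound}.

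For the equality case, the \MainTheoremRef{} says equality holds if and only if $a_1=\cdots=a_n=N$ for an integer $N\ge n$. Equal $a_i$ means all $\omega_i$ equal a common $\omega$, and then $E_0=n\omega/2$. Equality therefore holds iff $(E+n\omega/2)/\omega=N$ is an integer with $N\ge n$. Writing $N=M+n$, this says $E=\omega(M+n/2)$ with $M\ge0$ an integer. Conversely, any such $\omega$ and $E$ give $a_i=M+n$, so the theorem gives equality, and $E\ge E_0$ holds automatically.

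The remainder inequality \eqref{eq:spectral-remainder} follows by subtracting $\mathcal W_{\boldsymbol\omega}(E)$ from both sides. What remains is the degree claim, which I expect to be the only step needing real care. Put $s=E+E_0$, so that
\[
 \prod_i\left(\frac{s}{\omega_i}-1\right)=\frac{s^n}{\prod_i\omega_i}-\frac{s^{n-1}}{\prod_i\omega_i}\sum_i\omega_i+\cdots.
\]
For the correction term, $p_n(v)=(v-1)^n-\fall{v}{n}$ has degree at most $n-1$: its $v^n$ terms cancel. Its $v^{n-1}$ coefficient is $-n+\binom n2=n(n-3)/2$. Substituting $s=E+E_0$, the leading term $s^n/\prod\omega_i$ equals $E^n/\prod\omega_i+nE_0E^{n-1}/\prod\omega_i+\cdots$. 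The $E^n/n!$ part cancels exactly with $\mathcal W_{\boldsymbol\omega}(E)$, so the difference has degree at most $n-1$.

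It remains to show the $E^{n-1}$ coefficient is nonzero. Multiplied by $n!$, that coefficient is
\[
 \frac{nE_0-2E_0}{\prod_i\omega_i}-\frac{n(n-3)}{2\,\omega_n^{n-1}}
 =\frac{(n-2)E_0}{\prod_i\omega_i}-\frac{n(n-3)}{2\,\omega_n^{n-1}},
\]
where $\sum_i\omega_i=2E_0$. This could in principle vanish, so I would compare the two terms. Since $\prod_i\omega_i\le\omega_1\omega_n^{n-1}$ and $2E_0\ge n\omega_1$, the first term is at least $(n-2)n\omega_1/(2\omega_1\omega_n^{n-1})=n(n-2)/(2\omega_n^{n-1})$. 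This strictly exceeds $n(n-3)/(2\omega_n^{n-1})$, so the coefficient is positive and the degree is exactly $n-1$.
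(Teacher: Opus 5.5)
Your proposal is correct and follows the paper's proof essentially step for step. You check that the intercepts $(E+E_0)/\omega_i$ exceed one and have reciprocal sum $2E_0/(E+E_0)\le1$, apply the \MainTheoremRef{}, and translate its equality condition. You then compute the $E^{n-1}$ coefficient $\frac{1}{n!}\bigl(\frac{(n-2)E_0}{\prod_i\omega_i}-\frac{n(n-3)}{2\omega_n^{n-1}}\bigr)$ and bound it below using $E_0/\prod_i\omega_i\ge n/(2\omega_n^{n-1})$, exactly as the paper does. The only difference is that you take \eqref{eq:spectral-identity} as given, whereas the paper rederives it via the shift $x_i=m_i+1$; that is a one-line check, not a gap.
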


We also estimate the sum of the differences between the threshold $E$
and the lower energy levels. This energy sum, or first Riesz mean, is
\begin{equation}
\label{eq:riesz-definition}
 \mathcal R_{\boldsymbol\omega}(E)
 :=\sum_{\boldsymbol m}(E-E_{\boldsymbol m})_+
 =\operatorname{Tr}(E-H_{\boldsymbol\omega})_+,
 \qquad u_+:=\max(u,0).
\end{equation}
The operator $(E-H_{\boldsymbol\omega})_+$ replaces each eigenvalue
$E-E_{\boldsymbol m}$ by its positive part; its trace sums these values
with multiplicity. The identity
\[
 \mathcal R_{\boldsymbol\omega}(E)
 =E\mathcal N_{\boldsymbol\omega}(E)
  -\sum_{E_{\boldsymbol m}\le E}E_{\boldsymbol m}
\]
relates this cumulative energy difference to the state count and the
sum of energies below $E$. It is also the sum of the absolute values
of the negative eigenvalues of $H_{\boldsymbol\omega}-E$.
Frequencies are commensurate if $\omega_i=hq_i$ for some $h>0$
and positive integers $q_i$, equivalently if their ratios are rational.
The next corollary uses the grid $E_0+h\Z_{\ge0}$ to obtain a bound
at every threshold by interpolation. For equal frequencies, choosing
$h=\omega$ makes this grid the spectrum.

\begin{corollary}
\label{cor:energy-interpolation}
\label{cor:energy-sum}
Let $n\ge2$, $h>0$, and let $1\le q_1\le\cdots\le q_n$ be integers.
Set
\[
 \omega_i=hq_i,\qquad \sigma=\sum_{i=1}^nq_i,
 \qquad E_0=\frac{h\sigma}{2}.
\]
Define $V_0=0$ and, for integers $j\ge1$,
\begin{equation}\label{eq:grid-energy-polynomial}
 V_j:=\frac{h}{(n+1)!}
 g_{n+1}\left(j+\sigma,\frac{j+\sigma}{q_1},\ldots,
                         \frac{j+\sigma}{q_n}\right).
\end{equation}
For $E\ge E_0$, write
\begin{equation}\label{eq:energy-grid-position}
 \frac{E-E_0}{h}=M+\theta,\qquad
 M\in\Z_{\ge0},\quad 0\le\theta<1.
\end{equation}
Then
\begin{equation}\label{eq:interpolated-energy-bound}
 \mathcal R_{\boldsymbol\omega}(E)
 \le (1-\theta)V_M+\theta V_{M+1}.
\end{equation}
At $E=E_0$, both sides are zero. For $E>E_0$, equality holds if and
only if $q_1=\cdots=q_n=1$, for the specified choice of $h$ and $q_i$.
\end{corollary}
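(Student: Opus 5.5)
The plan is to reduce the energy sum at grid thresholds to a lattice-point count in an $(n+1)$-dimensional simplex, which adds one lattice coordinate. Then I apply the \MainTheoremRef{} in dimension $n+1\ge3$, and finally pass to general thresholds using the piecewise linearity of $\mathcal R_{\boldsymbol\omega}$.

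\emph{Step 1: grid thresholds.} Fix $M\in\Z_{\ge0}$ and set $E=E_0+hM$. Since $E_{\boldsymbol m}=E_0+h\sum_i q_im_i$, each term of the energy sum is
\[
(E-E_{\boldsymbol m})_+=h\,\bigl(M-\textstyle\sum_iq_im_i\bigr)_+ .
\]
For an integer $s\le M$, the quantity $M-s$ is the number of integers $m_0$ with $1\le m_0\le M-s$. Hence $\mathcal R_{\boldsymbol\omega}(E)/h$ counts the pairs $(m_0,\boldsymbol m)\in\Z_{\ge1}\times\Z_{\ge0}^n$ with $m_0+\sum_iq_im_i\le M$. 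Substituting $x_i=m_i+1$ for $i\ge1$ turns this into the count of positive integer vectors with $x_0+\sum_iq_ix_i\le M+\sigma$. Therefore
\[
\mathcal R_{\boldsymbol\omega}(E_0+hM)=h\,P_{n+1}\Bigl(M+\sigma,\tfrac{M+\sigma}{q_1},\ldots,\tfrac{M+\sigma}{q_n}\Bigr).
\]
The intercepts are in nonincreasing order because $1\le q_1\le\cdots\le q_n$.

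\emph{Step 2: apply the \MainTheoremRef{}.} For $M=0$ both sides vanish, since $x_0+\sigma\le\sigma$ has no solution with $x_0\ge1$; this matches $V_0=0$. For $M\ge1$, the hypotheses hold as follows.
\begin{itemize}
\item The smallest intercept exceeds $1$: $n\ge2$ gives $\sigma>q_n$, so $(M+\sigma)/q_n>1$.
\item The count is positive: $(1,\ldots,1)$ is admissible because $1+\sigma\le M+\sigma$.
\end{itemize}
The \MainTheoremRef{} in dimension $n+1$ then gives $\mathcal R_{\boldsymbol\omega}(E_0+hM)\le V_M$. Equality holds if and only if all $n+1$ intercepts equal a common integer $N\ge n+1$. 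Equating $M+\sigma$ with each $(M+\sigma)/q_i$ forces every $q_i=1$. Conversely, if every $q_i=1$, then $N=M+n\ge n+1$ is an integer, so equality holds.

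\emph{Step 3: interpolation.} The function $E\mapsto\mathcal R_{\boldsymbol\omega}(E)=\sum_{\boldsymbol m}(E-E_{\boldsymbol m})_+$ is continuous and piecewise linear, with breakpoints only at eigenvalues. Every eigenvalue lies on the grid $E_0+h\Z_{\ge0}$. So on $[E_0+hM,\,E_0+h(M+1)]$ the function is affine, and
\[
\mathcal R_{\boldsymbol\omega}(E)=(1-\theta)\,\mathcal R_{\boldsymbol\omega}(E_0+hM)+\theta\,\mathcal R_{\boldsymbol\omega}(E_0+h(M+1)).
\]
Combining this with Step 2 yields \eqref{eq:interpolated-energy-bound}.

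\emph{Step 4: equality.} Take $E>E_0$.
\begin{itemize}
\item If $\theta=0$, then $M\ge1$, and Step 2 applies directly.
\item If $\theta>0$, both weights are positive, so equality requires equality at both endpoints. The endpoint $M+1\ge1$ forces $q_i=1$.
\end{itemize}
Conversely, if all $q_i=1$, equality holds at every grid point, including $M=0$, and hence at every $E$ by interpolation.

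The only step needing care is the bijection in Step 1, including the shift that produces the extra intercept $M+\sigma$. Once it is set up correctly, the rest is a direct application of the \MainTheoremRef{}.
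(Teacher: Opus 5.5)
Your proposal is correct and takes essentially the same route as the paper. You add one lattice coordinate (your $m_0\ge1$ in place of the paper's $k\ge0$ with $k\le M-1-\sum_i q_i m_i$) to identify $\mathcal R_{\boldsymbol\omega}(E_0+hM)/h$ with $P_{n+1}$, apply the \MainTheoremRef{} in dimension $n+1$ for $M\ge1$ to get the grid bound and its equality case, and then use affineness of $\mathcal R_{\boldsymbol\omega}$ between consecutive grid points, with positive interpolation weights, to obtain the interpolated bound and its equality condition.
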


For $E=E_0+hj$ with integer $j\ge1$, an extra integer coordinate
expresses $\mathcal R_{\boldsymbol\omega}(E)/h$ as a lattice-point
count in dimension $n+1$. Every eigenvalue belongs to
$E_0+h\Z_{\ge0}$, so the values of $\mathcal R_{\boldsymbol\omega}$
between consecutive grid points are obtained by linear interpolation. For equal frequencies, interpolation
on the natural grid $h=\omega$ gives its exact value for every $E\ge E_0$.
Section~\ref{sec:applications} proves these statements and derives the
three-dimensional resonance formula. Figure~\ref{fig:quantum-resonance}
shows where the counting bound is exact and the periodic contribution
to the energy sum.

\paragraph*{Idea of the proof and organization.}
We prove the \MainTheoremRef{} by induction on the dimension,
starting from the three-dimensional theorem of Xu and Yau
\cite[Theorem~2.1]{XuYau1996}.
The induction uses the slicing arguments of Xu and Yau
\cite{XuYau1992,XuYau1996}, as developed further by Yau and Zhang
\cite{YauZhang2006} and Yau, Yuan, and Zuo \cite{YauYuanZuo2016}.
We group the positive integer points of the simplex according to their
last coordinate $x_n=k$. Each nonempty slice is an $(n-1)$-dimensional
simplex, and the induction hypothesis gives an upper bound for its number
of positive integer points. We show that the sum of these upper bounds
is at most $g_n(a_1,\ldots,a_n)/n!$, giving the required bound for the
total number $P_n(a_1,\ldots,a_n)$.

Sections~\ref{sec:slicing}--\ref{sec:closure} prove the main theorem,
and Section~\ref{sec:consequences} gives the arithmetic and geometric
consequences. The appendices contain the supporting estimates and
the special-family and boundary calculations.

\section{Quantum harmonic oscillators}\label{sec:applications}

\subsection{Eigenvalue counts and spectral remainders}
\label{sec:oscillator-app}

\begin{proof}[Proof of Corollary~\ref{cor:spectral-counting}]
The eigenbasis described in the introduction gives
\[
 \mathcal N_{\boldsymbol\omega}(E)
 =\#\left\{\boldsymbol m\in\Z_{\ge0}^n:
              \sum_i\omega_i m_i\le E-E_0\right\}.
\]
Set $x_i=m_i+1$. Since $\sum_i\omega_i=2E_0$, the constraint becomes
$\sum_i\omega_i x_i\le E+E_0$. This proves
\eqref{eq:spectral-identity}.
The intercepts $(E+E_0)/\omega_i$ exceed one, since
$E+E_0\ge2E_0=\sum_i\omega_i>\omega_i$.
They are in decreasing order and have reciprocal sum
$2E_0/(E+E_0)\le1$. Apply the \MainTheoremRef{} to obtain
\eqref{eq:spectral-bound}.
Equality holds exactly when these intercepts all equal $M+n$
for some integer $M\ge0$. This is equivalent to
$\omega_1=\cdots=\omega_n=\omega$ and $E=\omega(M+n/2)$.

The change of variables $y_i=\omega_i x_i$ turns the sublevel set
of $\frac12\sum_i(\xi_i^2+\omega_i^2x_i^2)$ into a $2n$-dimensional
ball, giving the volume term
$\mathcal W_{\boldsymbol\omega}(E)$ defined above.
Subtracting this term from \eqref{eq:spectral-bound} proves
\eqref{eq:spectral-remainder}. The terms of degree $n$ cancel.
The coefficient of $E^{n-1}$ in the remaining polynomial is
\[
 \frac1{n!}\left(
 \frac{(n-2)E_0}{\prod_i\omega_i}
 -\frac{n(n-3)}{2\omega_n^{n-1}}\right)
 \ge\frac1{2(n-1)!\omega_n^{n-1}}>0,
\]
since $E_0/\prod_i\omega_i\ge n/(2\omega_n^{n-1})$.
Thus the degree is exactly $n-1$.
\end{proof}

\subsection{Energy sums and an additional lattice coordinate}\label{sec:energy-app}
For arbitrary positive real frequencies and $n\ge3$, integration of
Corollary~\ref{cor:spectral-counting} gives
\begin{equation}
\label{eq:integrated-bound}
 \mathcal R_{\boldsymbol\omega}(E)
 =\int_{E_0}^E\mathcal N_{\boldsymbol\omega}(s)\,ds
 \le\int_{E_0}^E\mathcal B_{n,\boldsymbol\omega}(s)\,ds,
 \qquad E\ge E_0.
\end{equation}
The upper bound is an explicit polynomial in $E$.

We next establish the grid estimate used in
Corollary~\ref{cor:energy-interpolation}. Retain its notation
$n\ge2$, $h$, $q_i$, $\sigma$, $E_0$, and $V_j$.
Let $M\ge1$ be an integer.
Since $M$ and the $q_i$ are integers,
\begin{equation}
\label{eq:energy-lift}
 \begin{aligned}
 \frac{\mathcal R_{\boldsymbol\omega}(E_0+hM)}h
 &=\sum_{\boldsymbol m\in\Z_{\ge0}^n}
       \left(M-\sum_iq_i m_i\right)_+\\
 &=\#\left\{(\boldsymbol m,k)\in\Z_{\ge0}^{n+1}:
           \sum_iq_i m_i+k\le M-1\right\}\\
  &=P_{n+1}\left(M+\sigma,\frac{M+\sigma}{q_1},
                         \ldots,\frac{M+\sigma}{q_n}\right).
 \end{aligned}
\end{equation}
For each $\boldsymbol m$ with $\sum_iq_i m_i<M$, the extra coordinate
has exactly $M-\sum_iq_i m_i$ choices,
$k=0,\ldots,M-\sum_iq_i m_i-1$.
Shifting every coordinate by one gives the last line.
The intercepts exceed one and have reciprocal sum
$(1+\sigma)/(M+\sigma)\le1$. The \MainTheoremRef{} in dimension
$n+1$ therefore gives
\begin{equation}
\label{eq:energy-bound}
 \begin{aligned}
 \mathcal R_{\boldsymbol\omega}(E_0+hM)
 &\le V_M\\
 &=\frac{h}{(n+1)!}
 \left[(M+\sigma-1)\prod_{i=1}^n
       \left(\frac{M+\sigma}{q_i}-1\right)
       -p_{n+1}\left(\frac{M+\sigma}{q_n}\right)\right].
 \end{aligned}
\end{equation}
For every $M\ge1$, equality holds exactly when all the translated
intercepts are equal integers. Since $M+\sigma$ is an integer,
this is equivalent to $q_1=\cdots=q_n=1$.
At $M=0$, the energy sum and $V_0$ are both zero.

In the equality case,
\begin{equation}
\label{eq:energy-equality}
 \mathcal R_{(h,\ldots,h)}\bigl(h(M+n/2)\bigr)
 =h\binom{M+n}{n+1}.
\end{equation}

\subsection{Arbitrary energy thresholds for commensurate frequencies}
\label{sec:energy-interpolation}

\begin{proof}[Proof of Corollary~\ref{cor:energy-interpolation}]
For $E\ge E_0$, write $(E-E_0)/h=M+\theta$ as in
\eqref{eq:energy-grid-position}.
Since each $\sum_iq_i m_i$ is an integer, the function
$u\mapsto(u-\sum_iq_i m_i)_+$ is affine on $[M,M+1]$.
Summing over the finitely many contributing indices gives the exact identity
\begin{equation}\label{eq:energy-interpolation-identity}
 \begin{aligned}
 \mathcal R_{\boldsymbol\omega}(E)
 &=(1-\theta)\mathcal R_{\boldsymbol\omega}(E_0+hM)\\
 &\quad+\theta\mathcal R_{\boldsymbol\omega}(E_0+h(M+1)).
 \end{aligned}
\end{equation}
By \eqref{eq:energy-bound}, every positive-index grid value is at most
its corresponding $V_j$. At index zero, both are zero. This proves
\eqref{eq:interpolated-energy-bound}.
At $E=E_0$, both sides are zero. If $E>E_0$, at least one
positive-index value has positive interpolation weight. Its bound is
strict unless all $q_i=1$. When all $q_i=1$, both grid bounds are
exact, so equality holds for every $E\ge E_0$.
\end{proof}

For a fixed set of commensurate frequencies, the choice of $h$ can
be normalized by requiring $\gcd(q_1,\ldots,q_n)=1$. For equal
frequencies this gives $h=\omega$ and $q_i=1$.

\subsection{Resonance, equality, and comparison of bounds}
\label{sec:resonance}
Take equal frequencies $\omega_1=\cdots=\omega_n=\omega$ with
$n\ge2$. For $E\ge n\omega/2$, set
\[
 M=\left\lfloor\frac E\omega-\frac n2\right\rfloor,
 \qquad \theta=\frac E\omega-\frac n2-M.
\]
Then $M\in\Z_{\ge0}$, $0\le\theta<1$, and
$E=\omega(M+n/2+\theta)$.
Elementary counting and Corollary~\ref{cor:energy-interpolation} give
\begin{equation}\label{eq:equal-frequency-all-energies}
 \begin{aligned}
 \mathcal N_{(\omega,\ldots,\omega)}(E)
 &=\binom{M+n}{n},\\
 \mathcal R_{(\omega,\ldots,\omega)}(E)
 &=\omega\left[\binom{M+n}{n+1}
                    +\theta\binom{M+n}{n}\right].
 \end{aligned}
\end{equation}
Here $\binom n{n+1}=0$. The second identity follows from
\eqref{eq:energy-equality}, linear interpolation, and Pascal's identity.

Expanding the first binomial in powers of $E$ gives
\begin{equation}\label{eq:equal-count-remainder}
 \mathcal N_{(\omega,\ldots,\omega)}(E)
 -\frac{E^n}{n!\omega^n}
 =\frac{(1/2-\theta)E^{n-1}}{(n-1)!\omega^{n-1}}
   +O_{n,\omega}(E^{n-2})
 \qquad (E\to\infty).
\end{equation}
The error estimate is uniform for $0\le\theta<1$.
At the eigenvalues $E=\omega(M+n/2)$, one has $\theta=0$,
so the remainder has order $E^{n-1}$.
The equality family therefore attains the degree in
\eqref{eq:spectral-remainder}.

For $n=3$, substituting $M=E/\omega-3/2-\theta$ into the second
identity in~\eqref{eq:equal-frequency-all-energies} and expanding gives
\begin{equation}\label{eq:three-dimensional-resonance}
 \begin{aligned}
 \mathcal R_{(\omega,\omega,\omega)}(E)
 &=\frac{E^4}{24\omega^3}
   +\left(\frac{\theta(1-\theta)}4-\frac5{48}\right)
       \frac{E^2}{\omega}\\
 &\quad+\frac{\theta(1-\theta)(1-2\theta)}6 E
   +\frac{\omega}{128}(3-2\theta)(2\theta-1)^2(2\theta+1).
 \end{aligned}
\end{equation}
The coefficient of $E^2/\omega$ is $-5/48$ at $\theta=0$ and
$-1/24$ at $\theta=1/2$. It changes periodically with the position
of $E$ between energy levels. For fixed $\omega$, these two sequences
of energies show that replacing it by a constant cannot leave an
$o(E^2)$ error. Thus the first correction to the continuous
approximation $E^4/(24\omega^3)$ has a coefficient that oscillates
with the energy threshold. This is the resonant effect discussed
by Fefferman and Seco \cite[pp.~XI-4--XI-5]{FeffermanSeco1995}.

\begin{figure*}[t]
\centering
\includegraphics[width=\textwidth]{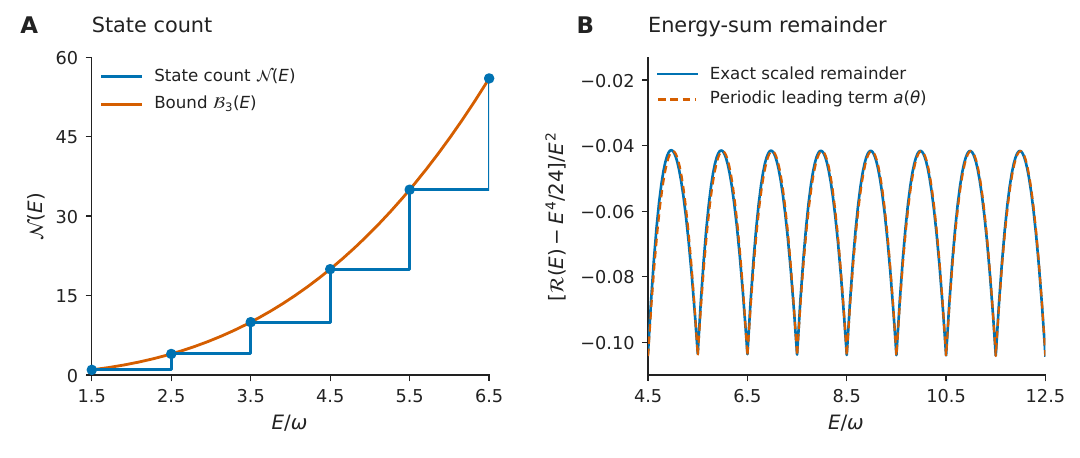}
\caption{\textbf{Finite-energy bounds and resonance in the isotropic
quantum oscillator.} Here $n=3$ and $\omega=1$.
\textbf{(A)} The exact state count is a step function. The polynomial
bound~\eqref{eq:spectral-bound} meets it at every energy level
$E=M+3/2$; the marked points indicate equality.
\textbf{(B)} The exact energy sum, after subtracting $E^4/24$ and
dividing by $E^2$, is compared with the periodic coefficient
$\theta(1-\theta)/4-5/48$, where $\theta=\{E-3/2\}$.
The difference consists of the lower-order terms in
\eqref{eq:three-dimensional-resonance} and tends to zero as
$E\to\infty$. The exact energy sum is also the interpolated upper
bound in Corollary~\ref{cor:energy-interpolation}.}
\label{fig:quantum-resonance}
\end{figure*}

A continuous integral comparison gives another upper bound, for
arbitrary positive frequencies:
\begin{equation}\label{eq:phase-energy-bound}
 \mathcal R_{\boldsymbol\omega}(E)
 \le\int_{\R_{\ge0}^n}\left(E-\sum_i\omega_i x_i\right)_+\,dx
 =\frac{E^{n+1}}{(n+1)!\prod_i\omega_i},\qquad E\ge0.
\end{equation}
The right-hand side equals
$\int_0^E\mathcal W_{\boldsymbol\omega}(s)\,ds$; we call it the
phase-space bound. To prove the inequality, note that
$x\mapsto(E-\sum_i\omega_i x_i)_+$ is convex. Jensen's
inequality on each unit cube bounds its value at the cube center by
its integral over that cube. The center values are the summands in
\eqref{eq:riesz-definition}. Summing the inequalities and evaluating
the integral proves \eqref{eq:phase-energy-bound}.

The relative sizes of the three bounds depend on the frequencies and
the energy.
For fixed unequal commensurate frequencies, the phase-space bound is
smaller than the interpolated bound at sufficiently large energies.
The same comparison holds for the integrated bound for all fixed
positive frequencies; see Appendix~\ref{si:energy-comparison}.
For example, take $\boldsymbol\omega=(1,1,2)$ and $E=6$.
Here $E_0=2$, and direct counting gives
$\mathcal R_{\boldsymbol\omega}(6)=24$. The phase-space bound is
$27$. The integrated bound is $344/9$, and the grid bound with
$h=1$ is $81/2$. For equal frequencies with $h=\omega$, the
interpolated bound gives the exact value
\eqref{eq:equal-frequency-all-energies}. Where all three estimates
apply, their minimum is also an upper bound.

\section{Slicing and the induction residual}\label{sec:slicing}

We first identify the nonempty slices, then compare the sum of their
inductive bounds with the proposed bound in dimension $n$.

\begin{lemma}\label{lem:slices}
Let $n\ge3$, $a_1\ge\cdots\ge a_n>1$, and $P_n>0$.  Put
\begin{equation}\label{eq:M}
 T=a_n\left(1-\sum_{i=1}^{n-1}\frac1{a_i}\right),
 \qquad M=\lfloor T\rfloor.
\end{equation}
Then $M\ge1$, and
\begin{equation}\label{eq:slice-sum}
 P_n(a_1,\ldots,a_n)
 =\sum_{k=1}^{M}
 P_{n-1}\!\left(
 a_1\left(1-\frac{k}{a_n}\right),\ldots,
 a_{n-1}\left(1-\frac{k}{a_n}\right)
 \right).
\end{equation}
Each slice in the sum is nonempty, and all its intercepts exceed $1$.
\end{lemma}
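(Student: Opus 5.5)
The plan is to slice the simplex by its last coordinate and check each claim directly from the defining inequality. A positive integer point $x=(x_1,\ldots,x_n)$ of $T(a_1,\ldots,a_n)$ has $x_n=k\ge1$ and satisfies
\[
 \sum_{i=1}^{n-1}\frac{x_i}{a_i}\le1-\frac{k}{a_n}.
\]
When $k<a_n$, this says that $(x_1,\ldots,x_{n-1})$ is a positive integer point of $T\bigl(a_1(1-k/a_n),\ldots,a_{n-1}(1-k/a_n)\bigr)$. When $k\ge a_n$, the right side is nonpositive, while the left side is positive because $x_i\ge1$, so no point exists. Hence $P_n$ is the sum over $k\ge1$ with $k<a_n$ of the slice counts $P_{n-1}(\cdot)$. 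Each slice count is interpreted as the count of positive integer points, and it may be zero.

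The next step is to find which slices are nonempty. The slice at level $k$ contains a positive integer point if and only if it contains $(1,\ldots,1)$. Indeed, any positive point dominates $(1,\ldots,1)$ coordinatewise, and the constraint is monotone. This containment condition reads
\[
 \sum_{i=1}^{n-1}\frac1{a_i}\le1-\frac k{a_n},
\]
which is equivalent to $k\le a_n\bigl(1-\sum_{i<n}a_i^{-1}\bigr)=T$. Since $k$ is an integer, this means $k\le M$. The hypothesis $P_n>0$ is equivalent to $\sum_i a_i^{-1}\le1$, as the paper notes, and this is precisely $T\ge1$. It follows that $M\ge1$. Moreover $T<a_n$, because $\sum_{i<n}a_i^{-1}>0$. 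So every $k\le M$ automatically satisfies $k<a_n$. Slices with $M<k<a_n$ are empty and contribute zero, which yields \eqref{eq:slice-sum} with every summand nonempty.

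It remains to show that the slice intercepts exceed $1$. For $1\le k\le M$ and $i\le n-1$, the slice contains $(1,\ldots,1)$, and the sum $\sum_{j<n}1/a_j(1-k/a_n)\le1$ consists of $n-1\ge2$ strictly positive terms. Therefore each individual term is strictly less than $1$, which means $a_i(1-k/a_n)>1$. The slice intercepts also inherit the ordering $a_1\ge\cdots\ge a_{n-1}$, because they are all multiplied by the same positive factor $1-k/a_n$. This ordering is needed so that the slice is a valid input for the induction hypothesis.

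I expect no serious obstacle. The only delicate point is the strictness of the intercept bound, and it relies on there being at least two terms in the slice's reciprocal sum, which is where $n\ge3$ is used.
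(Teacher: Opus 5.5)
Your proposal is correct and follows essentially the same argument as the paper. It slices by $x_n=k$, reduces nonemptiness of a slice to containing $(1,\ldots,1)$ (equivalently $k\le T$), and deduces $1\le T<a_n$ from $P_n>0$ and the positivity of $\sum_{i<n}a_i^{-1}$, then uses the $n-1\ge2$ positive reciprocal terms to get every slice intercept strictly greater than $1$.
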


\begin{proof}
Slices with $x_n=k\ge a_n$ are empty, since the remaining
coordinates are positive. Fix an integer $1\le k<a_n$. The remaining
coordinates satisfy
\[
 \sum_{i=1}^{n-1}
 \frac{x_i}{a_i(1-k/a_n)}\le1.
\]
The slice contains a positive integer point exactly when it contains
$(1,\ldots,1)$. This is equivalent to
\[
 \sum_{i=1}^{n-1}\frac1{a_i(1-k/a_n)}\le1,
\]
or $k\le T$. Thus the nonempty slices are $k=1,\ldots,M$. The original
simplex contains $(1,\ldots,1)$ because $P_n>0$, so $T\ge1$.
On each nonempty $(n-1)$-dimensional slice, the reciprocal sum of the
intercepts is at most $1$. It has at least two positive terms. Every
intercept is therefore greater than $1$. The reciprocal sum
in~\eqref{eq:M} is positive, so $T<a_n$. Hence $1\le M<a_n$ and,
with the notation below, $x=a_n-M>0$.
\end{proof}

Let $d=n-1$. Write the intercepts relative to the smallest one, and
record the distance from the last nonempty slice to that intercept:
\begin{equation}\label{eq:normalized}
 t=a_n,\quad x=t-M,\quad r_i=\frac{a_i}{t},\quad
 u_k=t-k\quad(1\le k\le M).
\end{equation}
Here $r_1\ge\cdots\ge r_d\ge1$ and $u_M=x>0$.
The $k$th nonempty slice has intercepts
$(r_1u_k,\ldots,r_du_k)$.  In particular, the last slice has intercepts
$(r_1x,\ldots,r_dx)$ and satisfies
\begin{equation}\label{eq:admissibility}
 \sum_{i=1}^{d}\frac1{r_i x}\le1.
\end{equation}
Define the induction residual as the difference between the proposed
bound and the sum of the slice bounds. The factor $n$ converts the
normalization $d!$ on each slice to $n!$:
\begin{equation}\label{eq:delta}
 \Delta_M
 =g_n(r_1t,\ldots,r_dt,t)
 -n\sum_{k=1}^{M}g_d(r_1u_k,\ldots,r_du_k).
\end{equation}

\begin{proposition}\label{prop:reduction}
Assume the conjecture in dimension $d=n-1$ for all nonempty slices.
If $\Delta_M\ge0$, then \eqref{eq:yau-proved} holds for $(a_1,\ldots,a_n)$.
\end{proposition}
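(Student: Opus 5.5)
The plan is to combine the slice decomposition of Lemma~\ref{lem:slices} with the induction hypothesis applied slice by slice, and then use $\Delta_M\ge0$ to absorb the sum of slice bounds into $g_n$. This gives the inequality directly; the equality characterization is separate and presumably handled later in the paper, so here we aim only at the inequality \eqref{eq:yau-proved}.

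First, by Lemma~\ref{lem:slices}, $M\ge1$ and
\[
 P_n(a_1,\ldots,a_n)=\sum_{k=1}^M P_{d}\bigl(r_1u_k,\ldots,r_du_k\bigr),
\]
since $a_i(1-k/a_n)=r_i t(1-k/t)=r_iu_k$ in the notation \eqref{eq:normalized}. The lemma also guarantees that each slice is nonempty and has all intercepts greater than $1$. The intercepts $r_1u_k\ge\cdots\ge r_du_k$ are in decreasing order, because $u_k>0$ and the $r_i$ are decreasing. Hence each slice meets the hypotheses of the conjecture in dimension $d=n-1$. Here we also need $d\ge3$, or, when $d=2$, whatever base statement the induction assumes; we take the hypothesis as stated, for all nonempty slices.

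Applying the induction hypothesis to each slice gives $d!\,P_d(r_1u_k,\ldots,r_du_k)\le g_d(r_1u_k,\ldots,r_du_k)$. Multiplying by $n$ and using $n\cdot d!=n!$, we sum over $k$ to get
\[
 n!\,P_n(a_1,\ldots,a_n)\le n\sum_{k=1}^M g_d(r_1u_k,\ldots,r_du_k)=g_n(r_1t,\ldots,r_dt,t)-\Delta_M.
\]
Since $(r_1t,\ldots,r_dt,t)=(a_1,\ldots,a_n)$, the assumption $\Delta_M\ge0$ yields $n!P_n\le g_n(a_1,\ldots,a_n)$.

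There is no genuine obstacle in this step. The only points needing care are the bookkeeping checks that each slice satisfies the ordering and nonemptiness hypotheses, and that the normalization factor $n$ in \eqref{eq:delta} matches $n!/d!$. The substantive difficulty, proving $\Delta_M\ge0$, is deferred to the later sections.
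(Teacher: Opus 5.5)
Your proposal is correct and matches the paper's proof: apply the induction hypothesis to each slice in \eqref{eq:slice-sum}, multiply by $n=n!/d!$, sum over $k$, and rewrite the total as $g_n(a_1,\ldots,a_n)-\Delta_M$ using \eqref{eq:delta}. Your extra checks are consistent with how the paper uses the proposition. These are the decreasing order of the slice intercepts $r_iu_k$, and the remark that the inductive use needs $d\ge3$, since the paper only invokes the proposition for $n\ge4$.
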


\begin{proof}
By Lemma~\ref{lem:slices}, the induction hypothesis applies to every
slice in~\eqref{eq:slice-sum}. Therefore
\[
 n!P_n
 =n\sum_{k=1}^{M}d!P_d(r_1u_k,\ldots,r_du_k)
 \le n\sum_{k=1}^{M}g_d(r_1u_k,\ldots,r_du_k)
 \le g_n(r_1t,\ldots,r_dt,t).
\]
\end{proof}

It remains to prove $\Delta_M\ge0$. The sum ends at the last nonempty
slice. On an empty slice the polynomial can be negative, as the
examples in Appendix~\ref{sec:audit} show.

\section{The anisotropy decomposition}\label{sec:main}

We expand the residual in the differences between the first $d$
intercepts. The root comparison below shows that every coefficient
of a nonconstant term is positive.

\subsection{A power-sum inequality and root lifting}\label{sec:root}
The next lemma extends the power-sum estimate of Yau and Zhang
\cite[Proposition~3.1]{YauZhang2006} to truncated sums of products
with nonnegative roots. Related factor-insertion comparisons appear
in \cite[Lemmas~3.3 and~3.4]{YauZhang2006}.
We give a direct proof suited to the truncation at the last nonempty slice.

\begin{lemma}\label{lem:root-lifting}
Let $d\ge2$, $t>1$, and let $M\in\Z$ satisfy $1\le M<t$.  Put $x=t-M$.
If $0\le\rho_1,\ldots,\rho_d\le x$, then
\begin{equation}\label{eq:root-lifting}
 (d+1)\sum_{k=1}^{M}\prod_{h=1}^d(t-k-\rho_h)
 <(t-1)\prod_{h=1}^d(t-\rho_h).
\end{equation}
\end{lemma}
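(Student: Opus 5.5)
The plan is to prove \eqref{eq:root-lifting} by induction on $M$ with $x$ and the $\rho_h$ held fixed, so that $t=x+M$ varies. Put
\[
 f(s)=\prod_{h=1}^d(s-\rho_h),\qquad
 S_M=\sum_{k=1}^{M}f(t-k)=\sum_{j=0}^{M-1}f(x+j),\qquad
 F_M=(x+M-1)\,f(x+M).
\]
In this notation the claim is $(d+1)S_M<F_M$ for every integer $M\ge1$. The hypothesis $0\le\rho_h\le x$ will be used in one form only: for $s\ge x$, every factor $s-\rho_h$ is nonnegative and satisfies $s-\rho_h\le s$. Hence for $s\ge x$ with $f(s)>0$,
\[
 \frac{f(s+1)}{f(s)}=\prod_{h=1}^d\Bigl(1+\frac1{s-\rho_h}\Bigr)
 \ge\Bigl(1+\frac1s\Bigr)^d .
\]

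\textbf{Base case $M=1$.} Here $t=x+1$, and the claim is $(d+1)f(x)<x\,f(x+1)$.
\begin{itemize}
\item If some $\rho_h=x$, then $f(x)=0$. The claim holds strictly because $x>0$ and $f(x+1)=\prod_h(x+1-\rho_h)>0$.
\item Otherwise $f(x)>0$, and the displayed ratio bound gives $x\,f(x+1)\ge (x+1)^d x^{1-d} f(x)$. It therefore suffices to show $(1+x)^d>(d+1)x^{d-1}$ for all $x>0$ and $d\ge2$.
\end{itemize}
To prove this, expand by the binomial theorem. The term $d\,x^{d-1}$ appears directly. The terms $x^d+\binom d2x^{d-2}$ are at least $2\sqrt{\binom d2}\,x^{d-1}\ge x^{d-1}$ by AM--GM. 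The constant term $1>0$ makes the inequality strict.

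\textbf{Inductive step $M-1\to M$, for $M\ge2$.} We have $S_M-S_{M-1}=f(s)$ with $s=x+M-1\ge x+1$. It suffices to show $F_M-F_{M-1}\ge(d+1)f(s)$, that is,
\[
 s\,f(s+1)-(s-1)f(s)\ge(d+1)f(s),
 \qquad\text{equivalently}\qquad
 s\,f(s+1)\ge(s+d)f(s).
\]
If $f(s)=0$, this is trivial. Otherwise, the ratio bound together with Bernoulli's inequality gives $f(s+1)/f(s)\ge1+d/s$, which is exactly what is needed. The strict inequality from the base case then carries through every step.

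I expect the main obstacle to be the base case, not the step. The step is a clean consequence of $\rho_h\ge0$. The base case is where $\rho_h\le x$ (equivalently the admissibility of the last slice) and the possibility $x<1$ matter. A naive integral comparison of $S_M$ with $\int_x^t f$ fails when the $\rho_h$ are small, which is why I have chosen to telescope in $M$ instead. In particular, $F_0=(x-1)f(x)$ can be negative, so the induction cannot start at $M=0$, and the polynomial inequality $(1+x)^d>(d+1)x^{d-1}$ has to be checked directly.
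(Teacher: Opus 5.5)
Your proof is correct, and it takes a different route from the paper. The paper first proves the root-free power-sum inequality $(t-1)t^d>(d+1)\sum_{k=1}^{\lfloor t\rfloor}(t-k)^d$ for every real $t>1$. It does this by induction on the degree $d$: on each interval $(N,N+1)$ the degree-$(d-1)$ case gives $Q_d'>t^{d-1}$, and $Q_2$ is checked by direct summation. It then truncates the sum at $M$ and inserts the roots one at a time via $0\le 1-\rho_j/(t-k)\le 1-\rho_j/t$.

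You instead fix $x$ and all the roots and induct on $M$. Everything then rests on the single ratio estimate $f(s+1)/f(s)\ge(1+1/s)^d$ for $s\ge x$. Bernoulli's inequality turns it into the step $s\,f(s+1)\ge(s+d)f(s)$, and the base case becomes the one-variable inequality $(1+x)^d>(d+1)x^{d-1}$.

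The paper's route yields a stronger intermediate result: the power-sum inequality with the sum running to $\lfloor t\rfloor$ rather than $M$. It also isolates the roots as a multiplicative damping factor. Your route avoids calculus and the induction on degree, and treats all roots at once. It is in fact the mechanism the paper uses later. The proof that $L_{d-1,M}$ increases in $M$ (Appendix~\ref{si:lambda}) and the bound on $I_L$ (Appendix~\ref{si:B-monotone}) are exactly your inductive step with $f(v)=v^{d-1}(v-1/s)$. The positivity of $\Lambda_{d,1}$ there is exactly your base case.

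One small slip: for $d=2$ the term $\binom{d}{2}x^{d-2}$ is the constant term. So you cannot invoke the constant $1$ separately to get strictness. Strictness still holds, because $2\sqrt{\binom{d}{2}}\,x^{d-1}\ge 2x^{d-1}>x^{d-1}$ for $x>0$. Alternatively, check directly that $(1+x)^2-3x=x^2-x+1>0$.
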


The proof first treats zero roots by a power-sum estimate, then inserts
the roots one at a time; see Appendix~\ref{si:root}.

\subsection{Expansion in intercept differences}

Set
\begin{equation}\label{eq:anisotropy-vars}
 s=r_d,\qquad \eta_i=r_i-s\quad(1\le i\le d-1),
 \qquad \eta_d=0.
\end{equation}
The variables $\eta_i\ge0$ measure the differences from the smallest
ratio; we call them the anisotropy variables. By~\eqref{eq:admissibility},
the smallest intercept of the last slice is $sx>1$. Write
$e_j(\eta_1,\ldots,\eta_{d-1})$ for the $j$th elementary symmetric
polynomial, with $e_0=1$.

\begin{theorem}\label{thm:anisotropy}
Let $n\ge3$, $d=n-1$, and retain the notation of
Sections~\ref{sec:slicing}--\ref{sec:main}.  Define
\begin{equation}\label{eq:C}
 C_M=(t-1)(st-1)^d-p_n(t)
 -n\sum_{k=1}^{M}\fall{su_k}{d}
\end{equation}
and, for $1\le j\le d-1$,
\begin{equation}\label{eq:L}
 L_{j,M}=(t-1)t^j(st-1)^{d-j}
 -n\sum_{k=1}^{M}u_k^j(su_k-1)^{d-j}.
\end{equation}
Then
\begin{equation}\label{eq:decomposition}
 \Delta_M=C_M+
 \sum_{j=1}^{d-1}e_j(\eta_1,\ldots,\eta_{d-1})L_{j,M},
\end{equation}
and
\begin{equation}\label{eq:Lpositive}
 L_{j,M}>0\qquad(1\le j\le d-1).
\end{equation}
Consequently $\Delta_M\ge C_M$, with equality if and only if
$r_1=\cdots=r_d=s$.
\end{theorem}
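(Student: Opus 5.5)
The plan is to obtain \eqref{eq:decomposition} by a direct polynomial expansion in the anisotropy variables, and then to deduce \eqref{eq:Lpositive} from Lemma~\ref{lem:root-lifting} with a suitable choice of roots.

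\emph{Decomposition.} Write $r_i=s+\eta_i$, so that $r_it-1=(st-1)+\eta_it$ for $1\le i\le d$, with $\eta_d=0$. Expanding the product over $i$ gives
\[
 \prod_{i=1}^d(r_it-1)=\sum_{j=0}^{d-1}e_j(\eta_1,\ldots,\eta_{d-1})\,t^j(st-1)^{d-j}.
\]
Multiplying by $(t-1)$ and subtracting $p_n(t)$ expresses the first term of \eqref{eq:delta}, since the smallest intercept there is $t$. For each slice, the smallest intercept is $r_du_k=su_k$. Hence
\[
 g_d(r_1u_k,\ldots,r_du_k)=\sum_{j=0}^{d-1}e_j\,u_k^j(su_k-1)^{d-j}-p_d(su_k).
\]
The $j=0$ part is $(su_k-1)^d-p_d(su_k)=\fall{su_k}{d}$ by \eqref{eq:correction}. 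Collecting the $j=0$ contributions gives $C_M$, and collecting the coefficient of each $e_j$ with $j\ge1$ gives $L_{j,M}$. This proves \eqref{eq:decomposition}.

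\emph{Positivity of $L_{j,M}$.} First, \eqref{eq:admissibility} has $d\ge2$ positive terms. The term $1/(sx)$ is therefore strictly smaller than the whole sum, which is at most $1$, so $sx>1$ and $1/s<x$. Now apply Lemma~\ref{lem:root-lifting} with $d$ roots: take $\rho_h=0$ for $h\le j$ and $\rho_h=1/s$ for $j<h\le d$. All roots lie in $[0,x]$, and $d\ge2$, $1\le M<t$ hold by Lemma~\ref{lem:slices}. Since $t-k-1/s=(su_k-1)/s$ and $t-1/s=(st-1)/s$, the lemma gives
\[
 (d+1)\sum_{k=1}^M u_k^j\Bigl(\frac{su_k-1}{s}\Bigr)^{d-j}<(t-1)\,t^j\Bigl(\frac{st-1}{s}\Bigr)^{d-j}.
\]
Multiplying by $s^{d-j}>0$ and using $n=d+1$ yields $L_{j,M}>0$.

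\emph{Consequence and equality.} Since every $\eta_i\ge0$, each $e_j\ge0$, and therefore $\Delta_M\ge C_M$. Because $d-1\ge1$, the index $j=1$ occurs, with $e_1=\sum_i\eta_i$. If some $r_i>s$, then $e_1>0$ and $L_{1,M}>0$, so the inequality is strict. Conversely, if all $r_i=s$, then every $e_j$ with $j\ge1$ vanishes and equality holds.

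There is no real obstacle here. The only delicate points are checking that the roots $1/s$ satisfy the hypothesis $\rho_h\le x$ of the lemma, which comes from the strict bound $sx>1$, and noting that the $j=0$ slice terms recombine exactly into falling factorials.
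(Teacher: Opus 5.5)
Your proposal is correct and follows the paper's proof essentially step by step. You use the same expansion of $\prod_i\bigl((sv-1)+\eta_i v\bigr)$ grouped by $j$, with the $j=0$ slice terms recombining into $\fall{su_k}{d}$. You apply Lemma~\ref{lem:root-lifting} in the same way, to the monic polynomial with root $0$ of multiplicity $j$ and root $1/s<x$ of multiplicity $d-j$ (where $sx>1$ comes from the $d\ge2$ positive terms in \eqref{eq:admissibility}), and then use the same $e_1$, $L_{1,M}>0$ argument for the equality case.
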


\begin{proof}
We first identify the coefficients, then prove their positivity.
For a variable $v$, expand
\begin{align*}
 \prod_{i=1}^{d}(r_i v-1)
 &=\prod_{i=1}^{d}\bigl((sv-1)+\eta_i v\bigr)\\
 &=\sum_{J\subseteq\{1,\ldots,d-1\}}
 \left(\prod_{i\in J}\eta_i\right)
 v^{|J|}(sv-1)^{d-|J|}.
\end{align*}
Since the smallest intercept in the slice is $sv$,
\begin{align*}
 g_d(r_1v,\ldots,r_dv)
 &=\prod_{i=1}^{d}(r_i v-1)-(sv-1)^d+\fall{sv}{d}\\
 &=\fall{sv}{d}
 +\sum_{\varnothing\ne J\subseteq\{1,\ldots,d-1\}}
 \left(\prod_{i\in J}\eta_i\right)
 v^{|J|}(sv-1)^{d-|J|}.
\end{align*}
Moreover,
\[
 g_n(r_1t,\ldots,r_dt,t)=(t-1)\prod_{i=1}^{d}(r_i t-1)-p_n(t).
\]
Substitute these identities into~\eqref{eq:delta} and group the terms by
$j=|J|$. This gives~\eqref{eq:decomposition}.

For fixed $j$, let
\[
 T_j(v)=v^j(sv-1)^{d-j}
 =s^{d-j}v^j\left(v-\frac1s\right)^{d-j}.
\]
The roots are $0$ and $1/s<x$, with their indicated multiplicities.
Apply Lemma~\ref{lem:root-lifting} to the monic product and multiply
by the positive factor $s^{d-j}$. Since $n=d+1$, this gives
\[
 n\sum_{k=1}^{M}T_j(u_k)< (t-1)T_j(t).
\]
Thus $L_{j,M}>0$. All $\eta_i\ge0$, so every term in the anisotropy
sum is nonnegative. Since $L_{1,M}>0$, the sum vanishes only if
$e_1(\eta)=\eta_1+\cdots+\eta_{d-1}=0$. This forces all $\eta_i=0$,
which also makes every term vanish.
\end{proof}

With $d,M,x,s$ fixed, the decomposition shows that $\Delta_M$
is strictly increasing in each $\eta_i\ge0$.

\begin{remark}\label{rem:core}
The equal-intercept part $C_M$, called the core, can be negative.
Setting $\eta_1=\cdots=\eta_{d-1}=0$ may violate the nonempty-slice
condition~\eqref{eq:admissibility}. Section~\ref{sec:gain} uses this
condition to bound the anisotropy terms from below. Their sum
compensates for any negative value of the core. For example, the
admissible simplex $(a_1,a_2,a_3,a_4)=(7,7,7/2,7/2)$ has
$M=1$ and
\[
 C_M=-\frac{15}{16},\qquad \Delta_M=\frac{205}{2}.
\]
The calculation is given in Appendix~\ref{si:negative-core}.
\end{remark}

\section{A scalar lower bound for the residual}\label{sec:gain}

The last-slice constraint limits how small the anisotropy contribution
can be. We use it to replace the intercept differences by a lower bound
depending only on $d,M,x,z$.

\subsection{The reciprocal constraint and the anisotropy gain}

For the last slice, put
\begin{equation}\label{eq:A-z}
 A_i=xr_i,\qquad z=sx=A_d.
\end{equation}
Thus $A_i\ge z>1$ and $\sum A_i^{-1}\le1$.

\begin{lemma}\label{lem:fixed-minimum}
Let $A_i\ge z>1$ for $1\le i\le d$, let $A_d=z$, and assume
$\sum_iA_i^{-1}\le1$.  Define
\begin{equation}\label{eq:Phi}
 \Phi_d(z)=
 \begin{cases}
 \displaystyle
 \frac{\bigl(1+(d-2)z\bigr)^{d-1}}{(z-1)^{d-2}},
 &1<z\le d,\\[0.8em]
 (z-1)^d,&z\ge d.
 \end{cases}
\end{equation}
Then
\begin{equation}\label{eq:fixed-minimum}
 \prod_{i=1}^{d}(A_i-1)\ge\Phi_d(z).
\end{equation}
For $1<z\le d$, equality holds if and only if
\[
 A_1=\cdots=A_{d-1}=\frac{z(d-1)}{z-1},\qquad A_d=z.
\]
For $z\ge d$, equality holds if and only if all $A_i=z$.
\end{lemma}

\begin{proof}
With $A_d=z$ fixed, minimizing the product is equivalent to minimizing
the sum of the logarithms of its other factors. Put $y_i=1/A_i$ for
$i<d$ and $\varphi(y)=\log((1-y)/y)$. We have
\[
 y_i\le\min\left\{\frac1z,1-\frac1z\right\}\le\frac12,
 \qquad \sum_{i=1}^{d-1}y_i\le1-\frac1z.
\]
On $(0,1/2]$, the function $\varphi$ is decreasing and convex, since
\[
 \varphi'(y)=-\frac1{y(1-y)},\qquad
 \varphi''(y)=\frac{1-2y}{y^2(1-y)^2}\ge0.
\]
If $z\le d$, the reciprocal sum can reach $1-1/z$ under the bounds
$y_i\le1/z$. The function $\varphi$ is decreasing, so its sum is
minimized at this endpoint. By Jensen's inequality, the minimum occurs
when all $y_i$ equal $(z-1)/(z(d-1))$. If $z\ge d$, the individual
bounds already imply the sum constraint. The minimum then occurs at
$y_i=1/z$ for all $i<d$. Substitute these values and multiply by the
fixed factor $A_d-1=z-1$. This gives~\eqref{eq:Phi}. For $d>2$,
strict convexity on $(0,1/2)$ and strict decrease give the stated
equality conditions. For $d=2$, only $y_1$ varies, and strict decrease
suffices. The two configurations agree at $z=d$.
\end{proof}

\begin{proposition}\label{prop:gain}
For $1\le j\le d-1$, put
\begin{equation}\label{eq:Rratio}
 R_{j,M}=\frac{L_{j,M}}{x^j(z-1)^{d-j}}.
\end{equation}
Then
\begin{equation}\label{eq:Rordered}
 R_{1,M}>R_{2,M}>\cdots>R_{d-1,M}>0.
\end{equation}
In particular, with
\begin{equation}\label{eq:Lambda}
 \Lambda_{d,M}(x,z)
 =R_{d-1,M}=\frac{L_{d-1,M}}{x^{d-1}(z-1)},
\end{equation}
one has
\begin{align}
 \sum_{j=1}^{d-1}e_j(\eta)L_{j,M}
 &\ge \Lambda_{d,M}(x,z)
 \left\{\prod_{i=1}^{d}(A_i-1)-(z-1)^d\right\}\notag\\
 &\ge \Lambda_{d,M}(x,z)
 \left\{\Phi_d(z)-(z-1)^d\right\}.
 \label{eq:sharp-gain}
\end{align}
For $M=1$, the coefficient is
\begin{equation}\label{eq:lambda}
 \Lambda_{d,1}(x,z)
 =x\left(1+\frac1x\right)^{d-1}
 \left(1+\frac{z}{x(z-1)}\right)-(d+1).
\end{equation}
For every $M\ge1$,
\begin{equation}\label{eq:Lambda-lambda}
 \Lambda_{d,M}(x,z)\ge\Lambda_{d,1}(x,z)>0.
\end{equation}
Consequently,
\begin{equation}\label{eq:scalar-bound}
 \Delta_M\ge C_M+\Lambda_{d,M}(x,z)
 \left\{\Phi_d(z)-(z-1)^d\right\}.
\end{equation}
\end{proposition}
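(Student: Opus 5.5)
The plan is to work with the rescaled coefficients
\[
 \ell_j(v):=\frac{v^j(sv-1)^{d-j}}{x^j(sx-1)^{d-j}}
 =\Bigl(\frac vx\Bigr)^j\Bigl(\frac{sv-1}{sx-1}\Bigr)^{d-j},
\]
so that
\[
 R_{j,M}=(t-1)\ell_j(t)-n\sum_{k=1}^M\ell_j(u_k).
\]
We have $z-1=sx-1$ and $v\ge x$ for every $v\in\{t,u_1,\dots,u_M\}$.

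\textbf{Step 1: the ordering \eqref{eq:Rordered}.} Passing from $j$ to $j+1$ multiplies each term by $\beta(v)=\dfrac{v(sx-1)}{x(sv-1)}$, and $\beta$ is increasing in $v$ because $sx>1$. I would write
\[
 R_{j,M}-R_{j+1,M}
 =n\sum_{k}\ell_j(u_k)\bigl(\beta(u_k)-1\bigr)-(t-1)\ell_j(t)\bigl(\beta(t)-1\bigr).
\]
This has the wrong sign termwise, so instead I compare $R_{j+1,M}$ with $R_{j,M}$ as two instances of Lemma~\ref{lem:root-lifting}. Concretely, the product $v^{j+1}(sv-1)^{d-j-1}/(x^{j+1}(sx-1)^{d-j-1})$ is obtained from $\ell_j$ by replacing one root $1/s$ by the root $0$. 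Then I would show that the truncated residual, viewed as a function of one root $\rho\in[0,x]$, i.e.
\[
 F(\rho)=\frac{(t-1)(t-\rho)\,Q(t)-n\sum_k(u_k-\rho)\,Q(u_k)}{x-\rho},
\]
with $Q$ the product of the remaining monic factors, is increasing in $\rho$. Dividing by $x-\rho$ corresponds to the normalization by $(sx-1)$. Computing
\[
 F(\rho)=\frac{(t-1)(t-x)Q(t)-n\sum_k(u_k-x)Q(u_k)}{x-\rho}+\Bigl[(t-1)Q(t)-n\sum_k Q(u_k)\Bigr]
\]
gives a numerator that is positive by Lemma~\ref{lem:root-lifting}, applied to $Q$ with the extra factor $(v-x)$; note that $(u_M-x)=0$. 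This yields monotonicity in $\rho$, hence $R_{j,M}>R_{j+1,M}$. Positivity of $R_{d-1,M}$ is just \eqref{eq:Lpositive}. I expect this reorganization to be the main obstacle, and it is the step where the truncation $u_M=x$ is essential.

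\textbf{Step 2: the gain bound \eqref{eq:sharp-gain}.} Since $\eta_i=(A_i-z)/x$, we have $e_j(\eta)x^j(z-1)^{d-j}=e_j(A-z)(z-1)^{d-j}$. Summing over $j\ge1$ gives exactly $\prod_{i}(A_i-1)-(z-1)^d$. Each summand is nonnegative, and $R_{j,M}\ge R_{d-1,M}=\Lambda_{d,M}$, so the first inequality follows. The second is Lemma~\ref{lem:fixed-minimum} together with $\Lambda>0$.

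\textbf{Step 3: $M=1$ and monotonicity in $M$.} For $M=1$ we have $u_1=x$ and $t=x+1$, so
\[
 \Lambda_{d,1}=x\Bigl(\frac{x+1}{x}\Bigr)^{d-1}\frac{s(x+1)-1}{sx-1}-n.
\]
Writing $s=z/x$ turns $\dfrac{s(x+1)-1}{sx-1}$ into $1+\dfrac{z}{x(z-1)}$, which is \eqref{eq:lambda}. Its positivity follows from $x(1+1/x)^{d-1}\ge x+(d-1)$ together with the last factor exceeding $1+1/x$; this already gives a product above $d+1$.

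For $\Lambda_{d,M}\ge\Lambda_{d,1}$ at fixed $(x,z)$, I would induct on $M$: go from $M$ to $M+1$ by setting $t\mapsto t+1$, which adds one slice $u=t$. The increment is
\[
 t\,\ell(t+1)-(t-1)\ell(t)-n\,\ell(t),
\]
which is $\ge0$ because
\[
 \frac{\ell(t+1)}{\ell(t)}\ge\Bigl(1+\frac1t\Bigr)^{d-1}\Bigl(1+\frac{1}{t-1/s}\Bigr)\ge\Bigl(1+\frac1t\Bigr)^{d}\ge1+\frac dt,
\]
and $t(1+d/t)=t+d=(t-1)+n$. Finally, \eqref{eq:scalar-bound} follows by inserting \eqref{eq:sharp-gain} into \eqref{eq:decomposition}, using the identity $\Phi_d(z)\ge(z-1)^d$ supplied by Lemma~\ref{lem:fixed-minimum}.
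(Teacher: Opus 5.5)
Your proof is correct and is essentially the paper's argument. In Step 1, your numerator $(t-1)(t-x)Q(t)-n\sum_k(u_k-x)Q(u_k)$ is, up to a positive constant, exactly the root-lifting residual for $P_j(v)=(v-x)v^j(sv-1)^{d-j-1}$; the paper gets this residual directly from the common denominator $xL_{j,M}-(z-1)L_{j+1,M}$, and your Steps 2--3 match the paper's last-slice expansion and its comparison of successive $M$. One slip you never use: $\beta$ is decreasing in $v$, not increasing, and computing $\ell_j(v)\bigl(1-\beta(v)\bigr)=P_j(v)/\bigl(x^{j+1}(z-1)^{d-j}\bigr)$ would have given the paper's one-line route without the detour through $F(\rho)$.
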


\begin{proof}
The ratios are positive by~\eqref{eq:Lpositive}. To order them,
compare adjacent terms for $1\le j\le d-2$. Taking a common denominator gives
\begin{align*}
 R_{j,M}-R_{j+1,M}
 =\frac{1}{x^{j+1}(z-1)^{d-j}}
 \biggl\{&(t-1)P_j(t)\\
 &-(d+1)\sum_{k=1}^{M}P_j(t-k)\biggr\},
\end{align*}
where
\[
 P_j(v)=(v-x)v^j(sv-1)^{d-j-1}.
\]
This degree-$d$ polynomial has positive leading coefficient. All its roots lie in $[0,x]$.
After division by the leading coefficient, Lemma~\ref{lem:root-lifting}
applies and makes the expression in braces strictly positive. This
proves~\eqref{eq:Rordered}.

The last-slice product expansion is
\begin{equation}\label{eq:last-product-expansion}
 \prod_{i=1}^{d}(A_i-1)-(z-1)^d
 =\sum_{j=1}^{d-1}e_j(\eta)x^j(z-1)^{d-j}.
\end{equation}
Since $R_{j,M}\ge\Lambda_{d,M}>0$ and $e_j(\eta)\ge0$, the first
inequality in~\eqref{eq:sharp-gain} follows term by term. The second
uses Lemma~\ref{lem:fixed-minimum} and $\Lambda_{d,M}>0$.

The bound $\Lambda_{d,M}\ge\Lambda_{d,1}(x,z)>0$ follows by comparing
successive values of $M$ and then setting $M=1$; the calculation is in
Appendix~\ref{si:lambda}. Theorem~\ref{thm:anisotropy}
and~\eqref{eq:sharp-gain} now give~\eqref{eq:scalar-bound}.
\end{proof}

The remaining induction step has $n\ge4$, hence $d\ge3$.
For the scalar estimates, take arbitrary parameters
$d\ge3$, $M\in\Z_{\ge1}$, $0<x\le z$, and $z>1$.
Set $s=z/x$, $t=M+x$, and $u_k=t-k$ in
\eqref{eq:C}, \eqref{eq:L}, and \eqref{eq:Lambda}.
These formulas define $C_M$ and $\Lambda_{d,M}(x,z)$ on this whole
parameter domain. Put
\begin{equation}\label{eq:E-B}
 E_d(z)=\Phi_d(z)-(z-1)^d,
 \qquad
 B_{d,M}(x,z)=C_M+\Lambda_{d,M}(x,z)E_d(z).
\end{equation}
For the parameters of a nonempty simplex,
Proposition~\ref{prop:gain} gives
\begin{equation}\label{eq:Delta-B}
 \Delta_M\ge B_{d,M}(x,z).
\end{equation}
We prove $B_{d,1}\ge0$ first, then show that $B_{d,M}$ is
nondecreasing in $M$ at fixed $d,x,z$. The following falling-factorial
estimates are used in both steps.

\subsection{Falling-factorial estimates}

\begin{lemma}
\label{lem:falling-envelopes}
Let $d\ge3$, put $A=d-1$, and define
\[
 F_d(v)=\fall{v}{d}.
\]
Recall that $p_{d+1}(v+1)=v^{d+1}-\fall{v+1}{d+1}$.
Then the following statements hold.
\begin{enumerate}[label=\textup{(\roman*)}]
\item For every integer $N\ge2$ and $1\le y\le N-1$,
\begin{equation}\label{eq:interval-product}
 |\fall{y}{N}|\le(N-1)!.
\end{equation}
\item For $v\ge A$,
\begin{equation}\label{eq:F-derivative-bound}
 F_d'(v)\le d(v-1)^{d-1}.
\end{equation}
\item If $A<z\le d$ and $0\le x\le z$, then
\begin{equation}\label{eq:pstar-envelope}
 p_{d+1}(x+1)\le p_{d+1}(z+1).
\end{equation}
\end{enumerate}
\end{lemma}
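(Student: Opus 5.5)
For \textup{(i)} I will bound the product factor by factor. For $y=N-1$ the product vanishes. Otherwise write $y=m+f$ with $m\in\{1,\ldots,N-2\}$ and $0\le f<1$, and split the factors of $\fall{y}{N}$ into those with $j\le m$ and those with $j>m$:
\[
 |\fall{y}{N}|=\prod_{i=0}^{m}(i+f)\prod_{i=1}^{K}(i-f),\qquad K=N-1-m\ge1 .
\]
Use $i+f\le i+1$ for $i\ge1$ and $i-f\le i$ for $i\ge2$. This gives the bound
\[
 |\fall{y}{N}|\le f(1-f)\,(m+1)!\,K!\le\tfrac14(m+1)!\,K!.
\]
Since $(m+1)+K=N$ with $1\le K\le N-1$, we have $(m+1)!K!=N!/\binom NK\le N!/N=(N-1)!$. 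This yields \eqref{eq:interval-product}, and even with the extra factor $1/4$.

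For \textup{(ii)} I will write $F_d'(v)=\sum_{i=0}^{d-1}\prod_{j\ne i}(v-j)$. For $v\ge A=d-1$ every factor is nonnegative, so AM--GM applies to each summand. The $d-1$ factors of the $i$th summand have mean $v-\bigl(\binom d2-i\bigr)/(d-1)$. This mean is at most $v-1$ exactly when $i\le (d-1)(d-2)/2$. That condition covers every $i\le d-1$ once $d\ge4$, so each summand is at most $(v-1)^{d-1}$, and summing gives \eqref{eq:F-derivative-bound}. The case $d=3$ I will check directly: $F_3'(v)=3v^2-6v+2<3(v-1)^2$.

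For \textup{(iii)} put $q(v)=p_{d+1}(v+1)=v^{d+1}-\fall{v+1}{d+1}$; the goal is $q(x)\le q(z)$. Note that $\fall{v+1}{d+1}=F_{d+1}(v+1)$, so $q'(v)=(d+1)v^d-F_{d+1}'(v+1)$. Part \textup{(ii)} applied with $d+1$ in place of $d$ (valid for $v+1\ge d$) gives $q'\ge0$ on $[d-1,\infty)$. Hence $q$ is nondecreasing there, which settles $x\in[d-1,z]$. It also gives $q(z)\ge q(d-1)=(d-1)^{d+1}$, because $\fall{d}{d+1}=0$.

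It remains to treat $0\le x\le d-1$. There $y=x+1\in[1,d]$, so part \textup{(i)} with $N=d+1$ gives $q(x)\le x^{d+1}+\tfrac14 d!$.
\begin{itemize}
\item For $x\le d-2$ this is at most $(d-2)^{d+1}+\tfrac14 d!$. By the mean value theorem, $(d-1)^{d+1}-(d-2)^{d+1}\ge (d+1)(d-2)^d$, and I will verify $(d+1)(d-2)^d\ge\tfrac14 d!$ for $d\ge4$. For $d=3$ I will compare directly: $1+\tfrac32\le 16$.
\item On the last interval $d-2\le x\le d-1$ the estimate must be sharper. There the factors $x+1,\ldots,x-d+2$ are nonnegative and $x-d+1\le0$, so $-\fall{x+1}{d+1}=(d-1-x)\prod_{j=-1}^{d-2}(x-j)\ge0$. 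I will bound this by $(d-1-x)\,(d-1)\cdot d!/2$-type quantities and compare with $(d-1)^{d+1}-x^{d+1}\ge(d+1)x^d(d-1-x)$. Both sides carry the factor $d-1-x$, so it reduces to checking $(d+1)(d-2)^d\ge\prod_{j=-1}^{d-2}(x-j)$, which holds up to small $d$, checked by hand.
\end{itemize}

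I expect this last interval $[d-2,d-1]$, where the falling factorial changes sign and the crude bound from \textup{(i)} is too weak, to be the main obstacle. If the direct comparison fails for small $d$, my fallback is to show $q'\ge0$ on $[d-2,d-1]$ directly by a refined version of the AM--GM argument in \textup{(ii)}.
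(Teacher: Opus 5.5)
Your overall plan is sound and follows the paper's proof closely. In (i) you split the factors on either side of $y$ exactly as the paper does, and you even gain a factor $\tfrac14$ from $f(1-f)$. Part (ii) is the same AM--GM argument, with the same direct check at $d=3$. In (iii) you use the same three intervals. On the top interval $[d-1,z]$ you neatly reuse (ii) with $d+1$ in place of $d$ instead of redoing AM--GM. On $[0,d-2]$ your bound via (i) matches the paper's.

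There is one step that fails as written. On the middle interval $[d-2,d-1]$ you reduce to checking $(d+1)(d-2)^d\ge\prod_{j=-1}^{d-2}(x-j)$. This is false for $d=3$: at $x=2$ the left side is $4$ and the right side is $3\cdot2\cdot1=6$. So the hand check you promise for small $d$ cannot succeed.

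The repair is not to discard $x^d$. Write $Q(x)=\prod_{j=-1}^{d-2}(x-j)$. Your mean-value comparison only needs $Q(x)\le(d+1)x^d$, and in fact $Q(x)\le x^d$ on this interval. To see this, note $0\le(x+1)(x-1)\le x^2$, and each of the remaining factors $x,x-2,\ldots,x-d+2$ lies in $[0,x]$. The paper gets the same bound from AM--GM, since the mean of the $d$ factors is $x-(d-3)/2\le x$. With this bound, your comparison $q(x)\le(d-1)^{d+1}=q(d-1)\le q(z)$ holds for every $d\ge3$.

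Your stated fallback is exactly the paper's argument. Using also $Q'\ge0$ and $x\le d-1$, the derivative of $\fall{x+1}{d+1}$ is at most $Q(x)$, so $q'(x)\ge(d+1)x^d-Q(x)\ge dx^d\ge0$ on $[d-2,d-1]$.
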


The proof uses bounds on the factors and the arithmetic--geometric mean inequality; see Appendix~\ref{si:factorials}.

\subsection{The one-slice inequality}

\begin{lemma}
\label{lem:one-slice-core}
Let $d\ge3$, $0<x\le z$, and $1<z<d$.  Put $A=d-1$ and
\[
 w=z-1+\frac zx.
\]
For $M=1$ one has
\begin{equation}\label{eq:C1-formula}
 C_1=xw^d-p_{d+1}(x+1)-(d+1)F_d(z),
\end{equation}
and
\begin{equation}\label{eq:core-lower}
 C_1\ge-(d-z)A^d.
\end{equation}
\end{lemma}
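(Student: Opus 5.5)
The identity \eqref{eq:C1-formula} is a direct specialization of \eqref{eq:C}. For $M=1$ we have $t=1+x$, $u_1=x$ and $s=z/x$. Hence $t-1=x$, $st-1=z(1+x)/x-1=z-1+z/x=w$, $p_n(t)=p_{d+1}(x+1)$, $su_1=z$ and $n=d+1$. Substituting these values gives \eqref{eq:C1-formula} at once.

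For \eqref{eq:core-lower} I would first remove the dependence on $x$ in the term $xw^d$. Differentiating $x\mapsto x(z-1+z/x)^d$ gives
\[
 w^{d-1}\Bigl(z-1-\frac{(d-1)z}{x}\Bigr).
\]
This is negative on $(0,z]$, because $(d-1)z/x\ge d-1>z-1$. So $xw^d$ is decreasing in $x$, and $xw^d\ge z^{d+1}$. Separately, $x\le z$ gives $w\ge x-1+1=x$, hence $xw^d\ge x^{d+1}$. Recall $p_{d+1}(x+1)=x^{d+1}-\fall{x+1}{d+1}$, and note the factorization $\fall{z+1}{d+1}=(z+1)F_d(z)$. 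I would then split into two ranges of $z$.

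\emph{Case $A<z<d$.} Here Lemma~\ref{lem:falling-envelopes}(iii) gives $p_{d+1}(x+1)\le p_{d+1}(z+1)=z^{d+1}-(z+1)F_d(z)$. Combining this with $xw^d\ge z^{d+1}$ yields
\[
 C_1\ge (z+1)F_d(z)-(d+1)F_d(z)=-(d-z)F_d(z).
\]
On $(A,d)$ every factor $z-j$ of $F_d(z)$ is positive and increasing. Therefore $0<F_d(z)<F_d(d)=d!\le(d-1)^d=A^d$ for $d\ge3$, and the claim follows.

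\emph{Case $1<z\le A$.} Now $d-z\ge1$, so it suffices to prove $C_1\ge -A^d$. Using $xw^d\ge x^{d+1}$, we get
\[
 C_1\ge \fall{x+1}{d+1}-(d+1)F_d(z).
\]
I then bound the two falling factorials separately.
\begin{itemize}
\item If $x+1\ge1$, then Lemma~\ref{lem:falling-envelopes}(i) with $N=d+1$ gives $|\fall{x+1}{d+1}|\le d!$. If instead $x<0$, then I would argue directly from the sign of the product or by a crude bound.
\item Lemma~\ref{lem:falling-envelopes}(i) with $N=d$ gives $|F_d(z)|\le(d-1)!$.
\end{itemize}
Together these give $C_1\ge -2\,d!$. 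For $d\ge4$ this already suffices, since $2\,d!\le(d-1)^d$ (for example $48\le81$ at $d=4$, and the ratio only improves as $d$ grows).

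The main obstacle is $d=3$ in this second range, where the crude constant $2\cdot3!=12$ exceeds $A^3=8$. Here I would use signs instead of absolute values. For $z\in(1,2]$ we have $F_3(z)=z(z-1)(z-2)\le0$, so the term $-(d+1)F_d(z)$ is nonnegative. Also $\fall{x+1}{4}=(x+1)x(x-1)(x-2)\ge-1$ for $0<x\le2$; this is the minimum of $y(y-1)(y-2)(y-3)$ on $[1,3]$. Hence $C_1\ge-1\ge-(3-z)\cdot8$. If the crude constant turns out too weak elsewhere, I would fall back on the retained slack $xw^d-x^{d+1}$ together with a sign analysis of $F_d$ on each interval $(k,k+1)$.
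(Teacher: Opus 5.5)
Your proposal is essentially the paper's proof: the same split at $z=A$, the bound $w\ge x$ with Lemma~\ref{lem:falling-envelopes}\textup{(i)} on $1<z\le A$, and Lemma~\ref{lem:falling-envelopes}\textup{(iii)} with the decrease of $xw^d$ on $A<z<d$. For $d=3$ you use the sign of $F_3$ and $\fall{x+1}{4}\ge-1$, as the paper does, and your bound $F_d(z)<F_d(d)=d!\le A^d$ is a slightly cleaner substitute for the paper's pairing argument.

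There is one arithmetic slip. The two bounds from Lemma~\ref{lem:falling-envelopes}\textup{(i)} give $C_1\ge-\bigl(d!+(d+1)(d-1)!\bigr)=-(2d+1)(d-1)!$, not $-2\,d!$. This is still enough for $d\ge4$, because $(d-1)!\le(d-1)^{d-2}$ and $2d+1\le(d-1)^2$ (at $d=4$ it reads $54\le81$). That inequality is exactly the paper's $A^d\ge(2d+1)A!$, so the argument goes through once corrected.

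Two smaller points on the same step. Your hedge about $x<0$ is moot, since $x>0$ by hypothesis. What Lemma~\ref{lem:falling-envelopes}\textup{(i)} with $N=d+1$ actually requires is $x+1\le d$, which holds because $x\le z\le A$.
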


The proof separates the ranges $1<z\le d-1$ and $d-1<z<d$; see Appendix~\ref{si:one-core}.

\begin{proposition}
\label{prop:B1}
For every $d\ge3$, $0<x\le z$, and $z>1$,
\begin{equation}\label{eq:B1-positive}
 B_{d,1}(x,z)\ge0.
\end{equation}
Equality holds if and only if $x=z=d$.
\end{proposition}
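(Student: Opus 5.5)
We split according to whether the last-slice minimum $z$ lies above or below $d$, since $E_d(z)=\Phi_d(z)-(z-1)^d$ vanishes exactly for $z\ge d$. In that range $B_{d,1}=C_1$, and the anisotropy gain gives nothing. In the range $1<z<d$ the plan is to use Lemma~\ref{lem:one-slice-core} for the core and to show that the gain $\Lambda_{d,1}(x,z)E_d(z)$ strictly exceeds the deficit $(d-z)A^d$.

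\emph{Case $z\ge d$.} With $t=1+x$ and $s=z/x$ we have $st-1=w=z-1+z/x$, so formula \eqref{eq:C1-formula} for $C_1$ is valid here as well. The derivation in Lemma~\ref{lem:one-slice-core} does not use $z<d$.

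First evaluate at the endpoint $x=z$. There $w=z$ and $p_{d+1}(z+1)=z^{d+1}-\fall{z+1}{d+1}$, which gives
\[
 C_1\big|_{x=z}=\fall{z+1}{d+1}-(d+1)\fall{z}{d}=\fall{z}{d}(z-d)\ge0,
\]
with equality exactly at $z=d$.

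It then remains to show that $C_1$ is nonincreasing in $x$ on $(0,z]$, so that the endpoint is the minimum. We have
\[
 \partial_x(xw^d)=w^{d-1}\bigl(z-1-(d-1)z/x\bigr)
\]
and
\[
 \partial_x p_{d+1}(x+1)=(d+1)x^d-F_{d+1}'(x+1).
\]
The factor $F_{d+1}'(x+1)$ should be controlled by Lemma~\ref{lem:falling-envelopes}(ii), applied in dimension $d+1$ where $x+1\ge d$. For small $x$ we use the interval bound (i) together with the large term $xw^d\sim z^dx^{1-d}$, which dominates as $x\to0$. If monotonicity fails in the middle range, the fallback is a direct comparison: use $w\ge z$ and $w\ge z/x$ to bound $xw^d$ from below, and the envelope (iii) to bound $p_{d+1}(x+1)$ from above.

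The equality statement follows from strictness of this monotonicity for $x<z$, combined with the endpoint computation.

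\emph{Case $1<z<d$.} By \eqref{eq:core-lower} it suffices to prove
\[
 \Lambda_{d,1}(x,z)\,E_d(z)>(d-z)A^d .
\]
We bound $\Lambda_{d,1}$ below uniformly in $x$. Put $c=z/(z-1)$ and expand $(1+1/x)^{d-1}$, keeping only the first three binomial terms. This gives
\[
 \Lambda_{d,1}\ge x+c-2+\frac{(d-1)c+\binom{d-1}{2}}{x}\ge c-2+2\sqrt{(d-1)c+\tbinom{d-1}{2}}.
\]
The inequality then reduces to a one-variable inequality in $z\in(1,d)$ using the explicit value
\[
 E_d(z)=\frac{(1+(d-2)z)^{d-1}}{(z-1)^{d-2}}-(z-1)^d .
\]
Near $z=1$ this holds easily, because $E_d$ blows up like $(z-1)^{2-d}$ while the right side stays bounded.

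\emph{Main obstacle.} The hard step is the regime $z\uparrow d$, where both sides vanish. Here I would compare first-order behaviour. Since $E_d$ is the difference of two functions that agree at $z=d$, we have $E_d(z)=(d-z)\,\kappa(z)$ with
\[
 \kappa(d)=-\frac{d}{dz}\Bigl[\frac{(1+(d-2)z)^{d-1}}{(z-1)^{d-2}}-(z-1)^d\Bigr]_{z=d}.
\]
After dividing by $d-z$, the task becomes showing that $\kappa(z)$ times the lower bound for $\Lambda_{d,1}$ exceeds $A^d$ on the whole interval. If the crude square-root bound is too weak, I would instead keep the exact minimum of $\Lambda_{d,1}$ over $x$, or split further at $z=d-1$ as in Lemma~\ref{lem:one-slice-core} and use the sharper core bound available there.

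Strictness in this case gives $B_{d,1}>0$ throughout. Hence equality in Proposition~\ref{prop:B1} occurs only at $x=z=d$.
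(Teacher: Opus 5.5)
\paragraph{The case $z\ge d$: a step that fails.} Your reduction to the endpoint $x=z$ rests on a false monotonicity claim. At fixed $z>d$, $C_1$ is not nonincreasing in $x$ on $(0,z]$. At $x=z$ the factor $z-1-(d-1)z/x$ equals $z-d>0$, and in fact $\partial_xC_1|_{x=z}=z^d+O(z^{d-1})$. For instance, with $d=3$ and $z=10$,
\[
 C_1(x)=-2x^3-x^2+731x-450+\frac{2700}{x}+\frac{1000}{x^2}.
\]
Then $C_1(10)=5040$ but $C_1(9)\approx4902$, and the minimum on $(0,10]$ is interior, near $x\approx2.3$. So neither the reduction to $x=z$ nor your equality argument, which relies on strict monotonicity in $x$, survives.

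The fallback does not repair this. Envelope (iii) is proved only for $A<z\le d$. Even if extended to $z>d$, it replaces $p_{d+1}(x+1)$, which is bounded for bounded $x$, by $p_{d+1}(z+1)$. The best available bound $\min_x xw^d=\frac{d^d}{A^A}z(z-1)^{d-1}$ would then have to beat
\[
 p_{d+1}(z+1)+(d+1)F_d(z)=z^{d+1}-\fall{z}{d+1}\sim\tbinom{d+1}{2}z^d .
\]
This fails for large $z$ once $d\ge4$; already $256/27<10$.

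The missing idea is to vary the other parameter: fix $x$ and vary $s=z/x$. Use $s(x+1)-1>(sx-1)(1+1/x)$ together with Lemma~\ref{lem:falling-envelopes}(ii). This gives
\[
 \frac{\partial C_1}{\partial s}>dx(sx-1)^{d-1}\left[\frac{(x+1)^d}{x^{d-1}}-(d+1)\right]>0\qquad(sx\ge d).
\]
Hence $C_1$ is minimized at $s=\max(1,d/x)$.
\begin{itemize}
\item For $x\ge d$ this is your endpoint value $\fall{x}{d+1}\ge0$.
\item For $x<d$, at $sx=d$, envelope (iii) with $z=d$ and the decrease of $x(A+d/x)^d$ on $(0,d)$ give $C_1>0$.
\end{itemize}
This route also yields the equality case $x=z=d$.

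\paragraph{The case $1<z<d$: an unfinished step.} Here your plan is the paper's: the core bound \eqref{eq:core-lower} plus the gain. Your truncated-binomial and AM--GM estimate is fine, since $c>1$ gives $\Lambda_{d,1}>-1+\sqrt{2d(d-1)}>A$. What is missing is the lower bound on $E_d$, which you leave as the ``main obstacle''. Endpoint asymptotics alone cannot settle it on all of $(1,d)$.

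The observation you need is that the first branch of $\Phi_d$ is decreasing on $(1,d]$. Its logarithmic derivative is $\frac{(d-2)(z-d)}{(1+(d-2)z)(z-1)}\le0$, and $\Phi_d(d)=A^d$. Hence, by the difference of powers with $A-(z-1)=d-z$,
\[
 E_d(z)\ge A^d-(z-1)^d\ge(d-z)A^{d-1}.
\]
Then $\Lambda_{d,1}E_d(z)>(d-z)A^d\ge-C_1$ uniformly on $(1,d)$. This gives strict positivity there, with no analysis of $\kappa(z)$ required.
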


For $1<z<d$, put $A=d-1$. The estimates
\[
 C_1\ge-(d-z)A^d,\qquad E_d(z)\ge (d-z)A^{d-1},\qquad
 \Lambda_{d,1}>A
\]
give $B_{d,1}>0$ for $d\ge3$. For $z\ge d$, $E_d(z)=0$
and the falling-factorial estimates give $C_1\ge0$. The full
proof and equality analysis are in Appendix~\ref{si:B1}.

\subsection{Monotonicity in the number of slices}

The next lemma is used to compare the core for successive numbers of
nonempty slices.

\begin{lemma}\label{lem:h-shape}
Let $d\ge3$, $A=d-1$, and
\begin{equation}\label{eq:h-def}
 h_d(v)=v^{d+1}-(v-1)^{d+1}-(d+1)F_d(v).
\end{equation}
Then $h_d$ is nondecreasing on $[A,\infty)$, and
\begin{equation}\label{eq:h-cap}
 h_d(v)\le h_d(A)\qquad(1<v\le A).
\end{equation}
\end{lemma}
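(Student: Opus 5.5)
The plan is to treat the two assertions separately. Both rest on the derivative
\[
 h_d'(v)=(d+1)\Bigl[v^d-(v-1)^d-F_d'(v)\Bigr].
\]

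\textbf{Monotonicity on $[A,\infty)$.} For $v\ge A=d-1\ge2$, Lemma~\ref{lem:falling-envelopes}(ii) gives $F_d'(v)\le d(v-1)^{d-1}$. The mean value theorem gives $v^d-(v-1)^d=d\xi^{d-1}$ for some $\xi\in(v-1,v)$, and $v-1>0$, so $v^d-(v-1)^d\ge d(v-1)^{d-1}$. Hence $h_d'(v)\ge0$ on $[A,\infty)$, and $h_d$ is nondecreasing there.

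\textbf{The cap \eqref{eq:h-cap}.} Since $F_d(A)=\fall{d-1}{d}=0$, we have $h_d(A)=A^{d+1}-(A-1)^{d+1}$. Put
\[
 G(v)=\bigl[A^{d+1}-(A-1)^{d+1}\bigr]-\bigl[v^{d+1}-(v-1)^{d+1}\bigr].
\]
The inequality \eqref{eq:h-cap} is then equivalent to $-(d+1)F_d(v)\le G(v)$. The bracket $v^{d+1}-(v-1)^{d+1}$ has derivative $(d+1)(v^d-(v-1)^d)>0$ for $v>1/2$. Thus $G$ is decreasing and $G\ge0$ on $(1,A]$, so the claim is trivial wherever $F_d(v)\ge0$. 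I would split $(1,A]$ as follows.

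On $[A-1,A]$, factor out the last term:
\[
 -F_d(v)=(A-v)\,v(v-1)\cdots(v-d+2).
\]
The remaining product is positive and at most $A(A-1)\cdots1=(d-1)!$. On the other side, convexity of $u\mapsto u^d$ (Jensen on $[u-1,u]$) gives
\[
 G(v)=\int_v^A(d+1)\bigl(u^d-(u-1)^d\bigr)\,du\ \ge\ (A-v)(d+1)\,d\bigl(v-\tfrac12\bigr)^{d-1}.
\]
For $d\ge4$ and $v\ge d-2$, this is at least $(A-v)(d+1)\,d(d-\tfrac52)^{d-1}\ge(A-v)(d+1)(d-1)!$, which is what is needed.

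On $(1,A-1]$, apply Lemma~\ref{lem:falling-envelopes}(i) with $N=d$ to get $|F_d(v)|\le(d-1)!$. Since $G$ is decreasing, $G(v)\ge G(A-1)$, and $G(A-1)$ is a second difference of $u^{d+1}$ at spacing one:
\[
 G(A-1)=A^{d+1}-2(A-1)^{d+1}+(A-2)^{d+1}\ge(d+1)d(A-2)^{d-1}.
\]
For $d\ge4$ this exceeds $(d+1)(d-1)!$, because $d(d-3)^{d-1}\ge(d-1)!$ there (for $d=4$: $4\ge6$ fails!). So at $d=4$ I would instead bound the second difference by its exact value, or use the sharper estimate $|F_d(y)|\le(d-2)!$-type bounds on the sign-changing subintervals. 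I would check $d=4,5$ by hand and let the crude bounds take over for larger $d$.

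\textbf{The case $d=3$.} Here $A=2$, and the cap is verified directly:
\[
 h_3(v)=\bigl(4v^3-6v^2+4v-1\bigr)-4\bigl(v^3-3v^2+2v\bigr)=6v^2-4v-1 .
\]
This is increasing for $v>1/3$, so $h_3(v)\le h_3(2)$ on $(1,2]$.

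\textbf{Main obstacle.} The main obstacle is the cap in small dimensions, especially $d=4$ near $v\in(1,A-1]$. There the uniform bound $(d-1)!$ on $|F_d|$ is too generous compared with the available gap. I would first try to compare $-F_d$ with $G$ interval by interval between consecutive integers, using the exact sign pattern of $F_d$ and Lemma~\ref{lem:falling-envelopes}(i) with smaller $N$ on each piece. Failing that, I would expand $h_4$ and $h_5$ as explicit polynomials and verify the inequality directly, with the general estimates above covering $d\ge6$.
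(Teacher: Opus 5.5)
Your proposal is correct and follows the paper's proof essentially step for step. It uses the same derivative comparison via Lemma~\ref{lem:falling-envelopes}(ii) for monotonicity, and the same explicit formula for $h_3$. For the cap it uses the same split: on $[A-1,A]$, factor out $A-v$, bound the remaining product by $A!$, and integrate the derivative of $v^{d+1}-(v-1)^{d+1}$; on $(1,A-1]$, compare with the value at $A-1$ and use Lemma~\ref{lem:falling-envelopes}(i).

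The $d=4$ ``obstacle'' is not real. It comes from your crude mean-value bound on the second difference, not from the bound $|F_d|\le(d-1)!$. The exact value $G(2)=3^5-2\cdot 2^5+1=180$ already exceeds $(d+1)(d-1)!=30$. The paper avoids the special case by using the sharper estimate $G(A-1)\ge(d+1)\{(A-1)^d-(A-2)^d\}$, which gives $75$ at $d=4$.
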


The proof follows from Lemma~\ref{lem:falling-envelopes} and interval estimates; see Appendix~\ref{si:h-shape}.

\begin{proposition}
\label{prop:B-monotone}
Fix $d\ge3$, $0<x\le z$, and $z>1$.  For every integer $M\ge1$,
\begin{equation}\label{eq:B-monotone}
 B_{d,M+1}(x,z)\ge B_{d,M}(x,z).
\end{equation}
If $z<d$, the inequality is strict.  If $z\ge d$, equality holds in
\eqref{eq:B-monotone} exactly when $s=z/x=1$.
\end{proposition}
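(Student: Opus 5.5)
The plan is to compare the two sides directly as increments. With $d,x,z$ fixed, the ratio $s=z/x$ is fixed. Passing from $M$ to $M+1$ replaces $t=M+x$ by $t+1$. The slice values $u_k=t+1-k$, $k=1,\dots,M+1$, are the old values $t-1,\dots,x$ together with one new value $t$. So each slice sum in \eqref{eq:C} and \eqref{eq:L} gains exactly one term, evaluated at $u=t$, while the leading terms are evaluated at $t+1$ instead of $t$. Writing $v=st$, this gives
\[
 C_{M+1}-C_M=t(v+s-1)^d-(t-1)(v-1)^d-\bigl[p_n(t+1)-p_n(t)\bigr]-nF_d(v).
\]
Similarly,
\[
 L_{j,M+1}-L_{j,M}=G_j(t+1)-G_j(t)-n\,t^j(st-1)^{d-j},
 \qquad G_j(u)=(u-1)u^j(su-1)^{d-j}.
\]
Since $E_d(z)$ does not depend on $M$, we get
\[
 B_{d,M+1}-B_{d,M}=\bigl(C_{M+1}-C_M\bigr)+\bigl(\Lambda_{d,M+1}-\Lambda_{d,M}\bigr)E_d(z).
\]

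\textbf{Step 1 (the $\Lambda$ term).} The successive comparison of $\Lambda_{d,M}$ in Appendix~\ref{si:lambda} was used to prove \eqref{eq:Lambda-lambda}; I would reuse it to show $\Lambda_{d,M+1}\ge\Lambda_{d,M}$. It should in fact be strict, since this increment is the positive expression produced by Lemma~\ref{lem:root-lifting} applied with one additional slice. Because $E_d(z)\ge0$, with $E_d(z)>0$ exactly when $z<d$ (Lemma~\ref{lem:fixed-minimum}), this term is nonnegative, and it is strictly positive when $z<d$.

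\textbf{Step 2 (the core increment vanishes at $s=1$).} Expand
\[
 p_n(t+1)-p_n(t)=t^{n}-(t-1)^{n}-nF_d(t),
\]
using $F_n(t+1)-F_n(t)=nF_d(t)$ with $n=d+1$. At $s=1$ we have $v=t$, and the core increment collapses to $0$. This matches the claimed equality case $s=1$ when $z\ge d$. For $s\ge1$, I would therefore write the core increment as $\int_1^s\partial_\sigma(\cdot)\,d\sigma$, holding $t$ fixed, and show the integrand is nonnegative, and positive for $\sigma>1$. The derivative is
\[
 dt(t+1)(v+\sigma-1)^{d-1}-dt(t-1)(v-1)^{d-1}-ntF_d'(v),\qquad v=\sigma t.
\]
When $v\ge A$, Lemma~\ref{lem:falling-envelopes}(ii) bounds $F_d'(v)\le d(v-1)^{d-1}$. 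It then suffices to show
\[
 (t+1)\Bigl(1+\frac{\sigma}{v-1}\Bigr)^{d-1}\ge t+d,
\]
which follows from Bernoulli's inequality, since $(d-1)\sigma(t+1)/(v-1)\ge d-1$ because $\sigma(t+1)\ge v-1$. Note that the case $s<1$ cannot occur in the positive form needed here; if it arises (when $x>z$ is excluded, $s\ge1$ holds automatically, since $0<x\le z$).

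\textbf{Step 3 (the range $v<A$).} When $v=st<A$, part (ii) of Lemma~\ref{lem:falling-envelopes} is unavailable. Here $t<A$ as well, and $z<d$ is forced, so the strictly positive term from Step 1 is present. I would instead regroup the core increment through $h_d$ from \eqref{eq:h-def}. The increment differs from $h_d(t)-h_d(v)$ by terms that are visibly nonnegative, namely $t(v+s-1)^d-v^{d+1}$-type comparisons. The cap \eqref{eq:h-cap} then controls $h_d(v)$ by $h_d(A)$, and monotonicity of $h_d$ on $[A,\infty)$ handles the case where $t$ crosses $A$. Any remaining negative bounded piece is to be absorbed by $(\Lambda_{d,M+1}-\Lambda_{d,M})E_d(z)$, using $E_d(z)\ge(d-z)A^{d-1}$ as in Proposition~\ref{prop:B1}.

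I expect Step 3 to be the main obstacle: small $v$ and $t$ with $z$ close to $d$, where both the core increment and $E_d$ are small. I would first try to prove that the core increment is itself nonnegative there via the $h_d$ bounds. Only failing that would I quantify the $\Lambda$ increment explicitly, from the one-term root-lifting gap, and balance it against the deficit.

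The equality statement then follows by collecting strictness. For $z<d$, the inequality is strict by Step 1. For $z\ge d$, we have $E_d=0$, and the integrand in Step 2 is positive for $\sigma>1$, so equality holds exactly when $s=1$.
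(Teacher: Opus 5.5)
Your split of the increment into $I_C=C_{M+1}-C_M$ plus $E_d(z)(\Lambda_{d,M+1}-\Lambda_{d,M})$ matches the paper. Your Step~2 is a valid alternative to the paper's use of the monotonicity of $h_d$, but only when $t\ge A$. The pointwise bound on the $\sigma$-integrand needs $\sigma t\ge A$ along the whole path $\sigma\in[1,s]$, that is, $t\ge A$, not $st\ge A$.

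The range $t<A\le st$ does occur, including when $z\ge d$: take $M=1$ and $x$ small. There the integrand is genuinely negative. For $d=11$, $t=1.9$ and $\sigma t=2$, the term $ntF_{11}'(2)=12\cdot1.9\cdot 2\cdot 8!\approx1.8\times10^6$ dominates a positive part of about $8\times10^4$. So your closing equality analysis for $z\ge d$ is unproved there.

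The repair is the $h_d$ route, but with the correct sign. The binomial expansion gives $I_C=h_d(st)-h_d(t)$ plus a nonnegative sum, which is positive when $s>1$; it is not $h_d(t)-h_d(v)$ plus something. The cap \eqref{eq:h-cap} must then be applied to $t$, not to $v$. For $t<A\le st$ one gets $h_d(st)\ge h_d(A)\ge h_d(t)$ by Lemma~\ref{lem:h-shape}.

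The real gap is Step~3, the strip $1<t\le st<A$, and your first-choice plan there fails. Take $d=11$, $M=1$, $x=0.9$ and $z\approx1.042$, so that $t=1.9$ and $st=2.2$. Then \eqref{eq:IC} gives $I_C\approx-2.3\times10^5$, because $(d+1)\{F_d(t)-F_d(st)\}\approx-2.5\times10^5$ swamps everything else. The core increment is therefore not nonnegative, and the absorption by the $E_d$ term is the heart of the proof, which you leave as ``to be absorbed.'' Three quantitative inputs are missing:
\begin{enumerate}[label=\textup{(\alph*)}]
\item A uniform floor $I_C\ge-2(d+1)A!$. This follows from the monotonicity of $v^{d+1}-(v-1)^{d+1}$ together with $|F_d|\le A!$ on $[1,A]$, which is Lemma~\ref{lem:falling-envelopes}(i).
\item An explicit lower bound $I_L>\binom d2t^{d-2}(st-1)$. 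This comes from $T_{d-1}(t+1)/T_{d-1}(t)>(1+1/t)^d$. Root lifting yields only positivity, and the increment is not a root-lifting gap.
\item The observation that $st=z+sM\ge z+M$, so the strip forces $z+M<A$. In particular $z<d-2$, so the hard case is not ``$z$ close to $d$'' as you expected. Instead $(d-z)/x\ge(d-z)/z>(M+1)/(d-1-M)\ge2/(d-2)$.
\end{enumerate}
Combine these with $E_d(z)\ge(d-z)A^{d-1}$, $t>x$ and $st>z$. The $E_d$ term then exceeds $\binom d2A^{d-1}(d-z)/x>dA^d/(d-2)$. Using $A!\le A^{d-2}$, this is at least $2(d+1)A!$, which closes the strip and gives strictness for $z<d$.
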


In these finite differences, $d,x,z$ are fixed, hence so is $s=z/x$;
$t=M+x$ increases by one with $M$.
Write $I_C=C_{M+1}-C_M$ and $I_L=L_{d-1,M+1}-L_{d-1,M}$.
Then
\[
 B_{d,M+1}-B_{d,M}
 =I_C+\frac{E_d(z)}{x^{d-1}(z-1)}I_L.
\]
Here $I_L>0$ and $E_d(z)\ge0$. If $I_C<0$, the falling-factorial
estimates give $I_C\ge-2(d+1)(d-1)!$, while the second term exceeds
$2(d+1)(d-1)!$, so the increment is positive. If $I_C\ge0$,
both contributions are nonnegative. Thus the increment is
nonnegative in either case.
The complete comparison and its equality conditions are proved
in Appendix~\ref{si:B-monotone}.

\begin{theorem}\label{thm:scalar-closure}
For every $d\ge3$, integer $M\ge1$, $0<x\le z$, and $z>1$,
\begin{equation}\label{eq:scalar-closed}
 B_{d,M}(x,z)\ge0.
\end{equation}
Equality holds if and only if $x=z=d$.
Consequently $\Delta_M\ge0$ for every admissible slice configuration.
\end{theorem}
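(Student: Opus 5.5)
The plan is to combine Proposition~\ref{prop:B1} with Proposition~\ref{prop:B-monotone} by induction on $M$, and then pass to $\Delta_M$ through \eqref{eq:Delta-B}.

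First, the base case. Fix $d\ge3$, $0<x\le z$ and $z>1$. Proposition~\ref{prop:B1} gives $B_{d,1}(x,z)\ge0$, with equality exactly when $x=z=d$.

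Next, the induction on $M$. Proposition~\ref{prop:B-monotone} gives $B_{d,M+1}\ge B_{d,M}$ at the same $(x,z)$, since the parameter domain $0<x\le z$, $z>1$ does not depend on $M$. Telescoping yields
\[
 B_{d,M}(x,z)\ge B_{d,1}(x,z)\ge0
 \qquad(M\ge1),
\]
which is \eqref{eq:scalar-closed}.

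For the equality condition, suppose $B_{d,M}(x,z)=0$. Then every step of the chain is an equality, so $B_{d,1}(x,z)=0$, and Proposition~\ref{prop:B1} forces $x=z=d$.
\begin{itemize}
\item If $M=1$, this is already the claim.
\item If $M\ge2$, the step from $1$ to $2$ must also be an equality. Here $z=d$, so we are in the case $z\ge d$ of Proposition~\ref{prop:B-monotone}, which requires $s=z/x=1$. This holds at $x=z=d$, so equality propagates to every $M$.
\end{itemize}
Conversely, at $x=z=d$ we have $B_{d,1}=0$ and every increment vanishes (again since $z\ge d$ and $s=1$), so $B_{d,M}=0$ for all $M$. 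This gives the stated equality characterization.

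The main care point is the case $z\ge d$. Strict monotonicity fails there, so the converse needs the exact-equality clause of Proposition~\ref{prop:B-monotone} and not only the inequality. One should also check that $x=z=d$ gives $s=1$, which is immediate.

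Finally, the consequence for $\Delta_M$. For an admissible nonempty configuration, the last slice gives $z=sx>1$ by \eqref{eq:admissibility}. Since $s=r_d\ge1$, we also have $0<x\le z$, so the parameters lie in the domain above. Then \eqref{eq:Delta-B} gives $\Delta_M\ge B_{d,M}(x,z)\ge0$.

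This covers $n\ge4$. For $n=3$, the induction instead starts from the Xu--Yau theorem, as in the introduction.

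All the substantive estimates are contained in the two cited propositions. The only genuine obstacle is keeping the equality bookkeeping consistent across the monotonicity chain, as described above.
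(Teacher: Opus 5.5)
Your proposal is correct and follows the paper's proof essentially verbatim: you telescope Proposition~\ref{prop:B-monotone} down to $B_{d,1}\ge0$ from Proposition~\ref{prop:B1}, read off $x=z=d$ from $B_{d,1}=0$, use the exact-equality clause of Proposition~\ref{prop:B-monotone} (case $z\ge d$, $s=1$) for the converse, and pass to $\Delta_M$ via \eqref{eq:Delta-B}. Your check that admissible configurations satisfy $0<x\le z$ and $z>1$ is a useful detail, which the paper carries out in the proof of the main theorem. Your second bullet in the forward direction is redundant, since $B_{d,1}=0$ already forces $x=z=d$.
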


\begin{proof}
Proposition~\ref{prop:B-monotone} and Proposition~\ref{prop:B1} give
\[
 B_{d,M}\ge B_{d,1}\ge0.
\]
If $B_{d,M}=0$, then $B_{d,1}=0$, so Proposition~\ref{prop:B1}
forces $x=z=d$. Conversely, these parameters give $B_{d,1}=0$
and $s=1$, so every subsequent increment vanishes by
Proposition~\ref{prop:B-monotone}. The last assertion follows
from~\eqref{eq:Delta-B}.
\end{proof}

\section{Proof of the main theorem}\label{sec:closure}

The induction starts from the following three-dimensional theorem.

\begin{theorem}[Xu--Yau {\cite[Theorem~2.1]{XuYau1996}}]
\label{thm:base-three}
For $a_1\ge a_2\ge a_3>1$ with $P_3>0$,
\[
 3!P_3(a_1,a_2,a_3)\le g_3(a_1,a_2,a_3),
\]
with equality if and only if $a_1=a_2=a_3$ is an integer.
\end{theorem}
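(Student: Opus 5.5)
The plan is to run the same slicing scheme of Sections~\ref{sec:slicing}--\ref{sec:gain} one dimension lower, with an explicit two-dimensional estimate replacing the induction hypothesis. For $n=3$ the correction is linear: $p_3(v)=(v-1)^3-v(v-1)(v-2)=v-1$. Hence
\[
 g_3(a_1,a_2,a_3)=(a_3-1)\bigl[(a_1-1)(a_2-1)-1\bigr].
\]
The corresponding planar correction is $p_2(v)=-(v-1)$, so the planar bound is $g_2(b_1,b_2)=b_1(b_2-1)$.

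\emph{Step 1: the planar estimate.} I first prove that $b_1\ge b_2>1$ and $P_2(b_1,b_2)>0$ imply
\[
 2P_2(b_1,b_2)\le b_1(b_2-1),
\]
with equality exactly when $b_1=b_2$ is an integer. Write $P_2=\sum_{k=1}^{m}\lfloor b_1(1-k/b_2)\rfloor$, where $m=\lfloor b_2(1-1/b_1)\rfloor<b_2$. Dropping the floors gives the bound $\frac{b_1}{b_2}\bigl(mb_2-\tfrac{m(m+1)}2\bigr)$. At the integer $m$ this is at most $b_1(b_2-1)/2$, except in the range $b_2-1<m<b_2$. The real maximizer $m=b_2-\tfrac12$ overshoots only by $b_1/(8b_2)$. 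In that range I will recover the deficit from the floor losses. The last slice satisfies $b_1(1-m/b_2)\ge1$, so it has short width. At least one term then loses a fractional part of size about $b_1(b_2-m)/b_2$ compared with the real value. An explicit case check on this fractional position should close the gap. The same check determines the equality case.

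\emph{Step 2: slicing and the residual.} Next I slice the three-dimensional simplex by $x_3=k$, as in Lemma~\ref{lem:slices}, which works verbatim for $n=3$. I use the normalization \eqref{eq:normalized} with $d=2$, a single anisotropy variable $\eta_1=r_1-s$, and $e_1=\eta_1$. The decomposition of Theorem~\ref{thm:anisotropy} holds for $d=2$ as well, because Lemma~\ref{lem:root-lifting} is stated for $d\ge2$. It gives $\Delta_M=C_M+\eta_1L_{1,M}$ with $L_{1,M}>0$. Using Lemma~\ref{lem:fixed-minimum} with $d=2$, where $\Phi_2(z)=z^2/(z-1)$ for $z\le2$, I get
\[
 \Delta_M\ge C_M+\Lambda_{2,M}(x,z)\bigl(\Phi_2(z)-(z-1)^2\bigr),
\]
as in Proposition~\ref{prop:gain}.

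\emph{Step 3: the scalar inequality.} Here $F_2(v)=v(v-1)$, so the core $C_M$ is an explicit cubic in $t=M+x$. Its sum over $k$ is a closed form in $M$, $x$ and $s=z/x$. I will first prove $B_{2,1}\ge0$ with equality only at $x=z=2$. For $z\ge2$ this reduces to a polynomial inequality in two variables. For $z<2$ the gain term $\Lambda_{2,1}E_2(z)$ has to absorb any negative core. I will then show that the increment $B_{2,M+1}-B_{2,M}$ is nonnegative, as in Proposition~\ref{prop:B-monotone}. Since all quantities are low-degree polynomials, this is a direct expansion rather than the envelope estimates of Lemma~\ref{lem:falling-envelopes}.

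The main obstacle is Step~1 together with the region $1<z<2$ in Step~3. The planar bound fails if the floors are simply dropped, so floor losses must be tracked exactly. Small intercepts near $1$ allow only one or two slices, and those cases may need separate direct enumeration. Finally, equality in the chain forces equality on every slice and $\Delta_M=0$. This gives $\eta_1=0$ and $x=z=2$ on the last slice, so the intercepts are equal integers.
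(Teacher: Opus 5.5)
\textbf{Genuine gap: the planar estimate in Step~1 is false, and the plan fails there.} The paper gives a counterexample right after citing this theorem. For $(b_1,b_2)=(100,5/2)$ the rows $x_2=1,2$ contain $60$ and $20$ points, so $2P_2=160$, while $g_2=b_1(b_2-1)=150$. More generally, take $b_2=5/2$ and $b_1=5j$. Every floor is then exact, and $2P_2-g_2=j/2$, which is unbounded.

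Your own heuristic shows why no fractional-part bookkeeping can recover this. At $m=b_2-\tfrac12$ the floorless sum exceeds $b_1(b_2-1)/2$ by $b_1/(8b_2)$, which grows linearly in $b_1$. The total floor loss is less than one per slice, hence less than $m<b_2$. This is why the paper does not start the induction at $n=2$. It takes the three-dimensional case from Xu--Yau \cite{XuYau1996} as a black box and does not reprove it.

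\textbf{Consequence for Steps~2 and~3.} Proposition~\ref{prop:reduction} needs $d!P_d\le g_d$ on every nonempty slice. For $d=2$ this fails even on slices of admissible three-dimensional simplices. Take $(a_1,a_2,a_3)=(150,15/4,3)$. Its nonempty slices are $(100,5/2)$ and $(50,5/4)$, with $80$ and $10$ points. So $3!P_3=540$, whereas $3\bigl(g_2(100,5/2)+g_2(50,5/4)\bigr)=3(150+12.5)=487.5$.

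The theorem itself holds here, since $g_3=817.5$. But it holds only because the residual $\Delta_M=330$ absorbs the negative slice defects $-10$ and $-7.5$. In the identity \eqref{eq:total-defect}, the terms $g_2-2P_2$ are therefore not nonnegative. So proving $B_{2,M}\ge0$ in Step~3 would not close the argument. A proof of the base case has to control the slice counts jointly, not through a pointwise planar bound of this form; the paper supplies no such argument and relies on the citation. For the same reason, your closing equality argument, which forces equality on every slice, is unavailable.

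\textbf{A smaller error.} For $d=2$, Lemma~\ref{lem:fixed-minimum} gives $\Phi_2(z)=1$ on $1<z\le2$. The minimizer $A_1=z/(z-1)$ yields $(A_1-1)(z-1)=1$. Your value $z^2/(z-1)$ is $A_1A_2$, not $(A_1-1)(A_2-1)$. The gain term is therefore $E_2(z)=1-(z-1)^2$, much smaller than you assumed.
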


Since $p_3(v)=v-1$, this bound reads
$6P_3\le(a_1-1)(a_2-1)(a_3-1)-a_3+1$.
The 1992 paper proves it for $a_3\ge2$. The cited 1996 theorem covers
$a_1\ge a_2\ge a_3\ge1$ under the same nonemptiness assumption.
It includes $1<a_3<2$, which is needed in the induction.

\begin{remark}
The corresponding two-dimensional bound is false. For example,
if $(a_1,a_2)=(100,5/2)$, then $P_2=80$ while
$g_2=150<160=2!P_2$. We therefore start with
Theorem~\ref{thm:base-three}.
\end{remark}

\begin{corollary}
\label{prop:residual-equality}
Let $d\ge3$, $M\in\Z_{\ge1}$, $t=M+x$, and $r_1\ge\cdots\ge r_d\ge1$,
with $x>0$ and $\sum_{i=1}^d(r_i x)^{-1}\le1$. Then
\begin{equation}\label{eq:residual-equality}
 \Delta_M=0
 \quad\Longleftrightarrow\quad
 r_1=\cdots=r_d=1\ \text{ and }\ x=d.
\end{equation}
\end{corollary}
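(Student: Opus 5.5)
The plan is to chain the inequalities already established. By Theorem~\ref{thm:anisotropy}, Proposition~\ref{prop:gain}, and Theorem~\ref{thm:scalar-closure},
\[
 \Delta_M\;\ge\;C_M+\sum_{j=1}^{d-1}e_j(\eta)L_{j,M}-\Bigl(\ldots\Bigr)\;\ge\;B_{d,M}(x,z)\;\ge\;0,
\]
with $z=sx=r_dx$ and $s=r_d$. The hypotheses $r_i\ge1$, $x>0$ and $\sum_i(r_ix)^{-1}\le1$ are exactly the admissibility conditions used there. In particular $z>1$, and $x\le z$ because $s\ge1$. So the general scalar parameter domain applies, and \eqref{eq:Delta-B} holds.

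For the forward direction, suppose $\Delta_M=0$. Then $B_{d,M}(x,z)=0$, and the equality clause of Theorem~\ref{thm:scalar-closure} forces $x=z=d$. Hence $s=z/x=1$, that is, $r_d=1$. Next, I would return to the first inequality $\Delta_M\ge C_M$ from Theorem~\ref{thm:anisotropy}, where equality holds only if all $\eta_i=0$. The cleanest route uses the strict positivity $L_{1,M}>0$ together with $e_j(\eta)\ge0$. If some $\eta_i>0$, then
\[
 \sum_j e_j(\eta)L_{j,M}>\Lambda_{d,M}\bigl\{\textstyle\prod(A_i-1)-(z-1)^d\bigr\},
\]
strictly, since $R_{1,M}>R_{d-1,M}$ by \eqref{eq:Rordered} and $e_1(\eta)>0$. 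Therefore $\Delta_M>B_{d,M}\ge0$. This contradicts $\Delta_M=0$, so all $\eta_i=0$ and $r_1=\cdots=r_d=1$.

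This strictness step is the one point needing care. The chain \eqref{eq:sharp-gain} was stated with non-strict inequalities, so I would extract strictness from the term $j=1$ when $d-1\ge2$. For the edge case $d-1=1$ this fails, but that case does not occur because $d\ge3$. Alternatively, strictness comes from the equality clause of Lemma~\ref{lem:fixed-minimum} at $z=d$: the minimum requires all $A_i=z$, so $r_i=r_d$.

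For the converse, set $r_i=1$ and $x=d$. Then $\eta=0$, so $\Delta_M=C_M$ by \eqref{eq:decomposition}. Also $z=d$, so $E_d(z)=0$ and $B_{d,M}=C_M$. Theorem~\ref{thm:scalar-closure} gives $B_{d,M}(d,d)=0$ for every $M$, hence $\Delta_M=0$.

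As a sanity check, these parameters correspond to $a_1=\cdots=a_n=t=M+d$, an integer, for which the slicing identity is exact. The slices are the equal-integer simplices with intercepts $N=M+d-k\ge d$, for which the bound is attained.
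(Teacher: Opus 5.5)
Your proposal is correct and follows the paper's proof. Both arguments use $0=\Delta_M\ge B_{d,M}\ge0$ and the equality clause of Theorem~\ref{thm:scalar-closure} to get $x=z=d$, and your converse ($\eta=0$, $E_d(d)=0$, $B_{d,M}(d,d)=0$) is the paper's. The paper gets $\eta=0$ more directly, by noting that $E_d(d)=0$ gives $C_M=B_{d,M}=0=\Delta_M$. Then the anisotropy sum in \eqref{eq:decomposition} vanishes and $L_{1,M}>0$ forces $e_1(\eta)=0$, which is your remark about equality in $\Delta_M\ge C_M$ made explicit. Your strict-gain route through $R_{1,M}>R_{d-1,M}$ is also valid but not needed.
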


\begin{proof}
Suppose $\Delta_M=0$. By~\eqref{eq:Delta-B} and
Theorem~\ref{thm:scalar-closure}, $B_{d,M}=0$ and $x=z=d$.
Hence $s=1$ and $E_d(d)=0$, so $C_M=B_{d,M}=0$.
The decomposition~\eqref{eq:decomposition} gives
\[
 0=\sum_{j=1}^{d-1}e_j(\eta)L_{j,M}.
\]
All summands are nonnegative and $L_{1,M}>0$, so
$e_1(\eta)=0$. Thus all $\eta_i=0$ and $r_1=\cdots=r_d=1$.

Conversely, if all $r_i=1$ and $x=d$, then $z=d$, all $\eta_i=0$,
and $E_d(d)=0$. By~\eqref{eq:decomposition}, \eqref{eq:E-B}, and
Theorem~\ref{thm:scalar-closure}, $\Delta_M=C_M=B_{d,M}=0$.
\end{proof}

We can now prove the \MainTheoremRef{}.

\begin{proof}[Proof of the \MainTheoremRef{}]
The case $n=3$ is Theorem~\ref{thm:base-three}. Let $n=d+1\ge4$ and
assume the theorem in dimension $d$. Lemma~\ref{lem:slices} gives
$M\ge1$ and the slice identity \eqref{eq:slice-sum}. Each nonempty
slice has intercepts greater than $1$, so the induction hypothesis
applies to every summand. The last slice satisfies
\eqref{eq:admissibility}, with $s\ge1$, $0<x\le z=sx$, and $z>1$.
Theorem~\ref{thm:scalar-closure} gives the required residual bound. Proposition~\ref{prop:reduction} then proves
\eqref{eq:yau-proved}.

To track equality, separate the total defect into the slice residual
and the defects in the lower-dimensional bounds:
\begin{align}\label{eq:total-defect}
 g_n-n!P_n
 =\Delta_M+n\sum_{k=1}^{M}
 \bigl\{g_d(r_1u_k,\ldots,r_du_k)
       -d!P_d(r_1u_k,\ldots,r_du_k)\bigr\}.
\end{align}
Every term on the right is nonnegative.  Equality in
\eqref{eq:yau-proved} therefore implies $\Delta_M=0$.  By
Corollary~\ref{prop:residual-equality}, all $r_i=1$ and $x=d$.
Hence every original intercept equals
\[
 t=M+x=M+d\in\Z.
\]
The assumption $P_n>0$ then forces $t\ge n$.

Conversely, let all intercepts equal an integer $N\ge n$. Counting
positive integer vectors with coordinate sum at most $N$ gives
\[
 P_n(N,\ldots,N)=\binom Nn,
\]
while
\[
 g_n(N,\ldots,N)=\fall{N}{n}=n!\binom Nn.
\]
This proves the equality condition and completes the induction.
\end{proof}

\section{Arithmetic and geometric consequences}\label{sec:consequences}

\subsection{Smooth numbers}\label{sec:smooth-app}
A positive integer is called $y$-smooth if all its prime factors are
at most $y$. Let $\Psi(X,y)$ count these integers up to $X$, including
$1$. Throughout this subsection, let $X\ge1$ and $y\ge5$, and let
$q_1<\cdots<q_k$ be all primes at most $y$.
Unique factorization gives an exact simplex description:
\begin{equation}
\label{eq:smooth-lattice}
 \Psi(X,y)
 =\#\left\{\boldsymbol\ell\in\Z_{\ge0}^k:
          \sum_{i=1}^k\ell_i\log q_i\le\log X\right\}.
\end{equation}
Write $L=\log X$, $\lambda_i=\log q_i$, and $S=\sum_i\lambda_i$.
The elementary volume bounds recorded by Ennola \cite{Ennola1969} are
\begin{equation}\label{eq:ennola-volume}
 \frac{L^k}{k!\prod_i\lambda_i}
 \le\Psi(X,y)\le
 \frac{(L+S)^k}{k!\prod_i\lambda_i}.
\end{equation}
The \MainTheoremRef{} gives the following bound at each finite $X$.

\begin{corollary}
\label{cor:smooth-introduction}
For $X\ge1$ and $y\ge5$, with the notation above,
\begin{equation}
\label{eq:smooth-bound-general}
 k!\Psi(X,y)
 <\prod_{i=1}^k\left(\frac{L+S}{\lambda_i}-1\right)
       -p_k\left(\frac{L+S}{\lambda_k}\right).
\end{equation}
\end{corollary}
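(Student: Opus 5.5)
Shifting each exponent by one turns the smooth-number count into a positive-lattice-point count in a simplex, exactly as in the oscillator identity \eqref{eq:spectral-identity}. The bound then follows from the \MainTheoremRef{} in dimension $k$. The inequality is strict because equality would force all the intercepts to be equal.

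\emph{Translation to a simplex.} Put $x_i=\ell_i+1$. The constraint $\sum_i\ell_i\lambda_i\le L$ in \eqref{eq:smooth-lattice} becomes $\sum_i\lambda_i x_i\le L+S$ with $x_i\ge1$. Hence
\[
 \Psi(X,y)=P_k\left(\frac{L+S}{\lambda_1},\ldots,\frac{L+S}{\lambda_k}\right).
\]
Since $\lambda_1<\cdots<\lambda_k$, the intercepts $a_i=(L+S)/\lambda_i$ are strictly decreasing, as \eqref{eq:simplex} requires.

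\emph{Hypotheses of the main theorem.} The dimension is $k=\pi(y)$, and $y\ge5$ gives $k\ge3$. Each intercept exceeds $1$ because $L\ge0$ and $S>\lambda_i$ when $k\ge2$. The reciprocal sum is $S/(L+S)\le1$, so $P_k>0$; indeed the origin $\boldsymbol\ell=0$, which corresponds to the integer $1$, always counts.

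\emph{Conclusion.} The \MainTheoremRef{} now yields $k!\Psi(X,y)\le g_k(a_1,\ldots,a_k)$, which is the right side of \eqref{eq:smooth-bound-general}. Equality would require $a_1=\cdots=a_k$, that is, $\lambda_1=\lambda_k$. This is impossible because $\log 2<\log 5$, so the inequality is strict.

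No step is a genuine obstacle. The only point that needs care is making sure the intercepts are ordered as the theorem requires and that the nonemptiness condition holds even at $X=1$, where $L=0$.
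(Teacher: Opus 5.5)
Your proposal is correct and matches the paper's proof. You shift the exponents to get $\Psi(X,y)=P_k\bigl((L+S)/\lambda_1,\ldots,(L+S)/\lambda_k\bigr)$, check that the intercepts are distinct, exceed one, and have reciprocal sum $S/(L+S)\le1$ with $k\ge3$, apply the \MainTheoremRef{}, and get strictness from its equality condition. Your explicit checks of the ordering and of the case $X=1$ are details the paper leaves implicit.
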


\begin{proof}
Translation in \eqref{eq:smooth-lattice} gives
$\Psi(X,y)=P_k((L+S)/\lambda_1,\ldots,(L+S)/\lambda_k)$.
These intercepts are distinct and exceed one. Their reciprocal sum is
$S/(L+S)\le1$. Since $k\ge3$, the \MainTheoremRef{} applies. Its equality
condition gives strict inequality.
\end{proof}

Fixed-dimensional versions of this substitution appear in
\cite{LuoYauZuo2015,LinLuoYauZuo2014,LiangYauZuo2016,
YauYuanZuo2016,LinYangYauZuo2016}.

\subsection{Lattice-point discrepancies}\label{sec:geometric-app}
\begin{remark}\label{sec:remainder-app}
For $K=T(a_1,\ldots,a_n)$ with $n\ge3$, set
$\alpha=\sum_i a_i^{-1}$, $A=\prod_i a_i$, and
$\mathcal D_K(R)=\#(RK\cap\Z^n)-AR^n/n!$.
For $R\ge1$, translation gives
$\#(RK\cap\Z^n)=P_n((R+\alpha)a_1,\ldots,(R+\alpha)a_n)$.
The \MainTheoremRef{} then yields
\begin{equation}\label{eq:discrepancy-upper}
 0\le\mathcal D_K(R)\le
 \frac{g_n((R+\alpha)a_1,\ldots,(R+\alpha)a_n)-AR^n}{n!}.
\end{equation}
The upper bound has degree $n-1$, with equality exactly when all
$a_i$ equal a common $c$ and $Rc$ is an integer. For the lower bound,
the unit cubes based at the counted nonnegative integer points cover
$RK$ and have disjoint interiors. Under algebraicity and $\mathbb Q$-linear independence of the reciprocal
intercepts, Borda \cite[Theorem~8]{Borda2018} obtains a polynomial
approximation $\#(RK\cap\Z^n)=q_K(R)+\varepsilon_K(R)$ and an
asymptotic bound for $\varepsilon_K(R)$. The leading term of $q_K(R)$
is $AR^n/n!$, so the volume discrepancy is
\[
 \mathcal D_K(R)=q_K(R)-\frac{AR^n}{n!}+\varepsilon_K(R).
\]
\end{remark}

\clearpage
\appendix
\section*{Appendices}
The appendices contain supporting proofs, direct counts for special
families, and boundary examples.

\section{Power-sum and root comparison}\label{si:root}
\begin{proof}[Proof of Lemma~\ref{lem:root-lifting}]
We first prove the case $\rho_h=0$ for all $h$ by induction on the degree,
then insert the roots one at a time. Let $N=\lfloor t\rfloor$ and define
\[
 Q_d(t)=(t-1)t^d-(d+1)\sum_{k=1}^{N}(t-k)^d.
\]
Write $t=N+\beta$, $0\le\beta<1$. Direct summation gives
\[
 Q_2(t)=\beta^3-\beta^2+N\beta+\frac{N(N-1)}2>0
 \qquad(t>1).
\]
For the induction step, suppose $Q_{d-1}(t)>0$ for every $t>1$.
On each interval $(N,N+1)$,
\begin{align*}
 Q_d'(t)
 &=t^d+d(t-1)t^{d-1}
   -d(d+1)\sum_{k=1}^{N}(t-k)^{d-1}\\
 &>t^d+d(t-1)t^{d-1}-(d+1)(t-1)t^{d-1}
 =t^{d-1}>0.
\end{align*}
The function $Q_d$ is continuous at integers because each new summand
is zero there. Since $Q_d(1)=0$, we have $Q_d(t)>0$ for every $t>1$.
Since $M\le N$, discarding the remaining nonnegative summands
proves~\eqref{eq:root-lifting} when all roots are zero.

Insert the roots one at a time.  Set
\[
 P_j(v)=v^{d-j}\prod_{h=1}^{j}(v-\rho_h),
 \qquad 0\le j\le d.
\]
For $1\le k\le M$, we have $t-k\ge x\ge\rho_j$, so
\[
 0\le1-\frac{\rho_j}{t-k}\le1-\frac{\rho_j}{t}.
\]
If the result holds for $P_{j-1}$, then
\begin{align*}
 (d+1)\sum_{k=1}^{M}P_j(t-k)
 &=(d+1)\sum_{k=1}^{M}P_{j-1}(t-k)
       \left(1-\frac{\rho_j}{t-k}\right)\\
 &\le\left(1-\frac{\rho_j}{t}\right)
      (d+1)\sum_{k=1}^{M}P_{j-1}(t-k)\\
 &<\left(1-\frac{\rho_j}{t}\right)(t-1)P_{j-1}(t)
 =(t-1)P_j(t).
\end{align*}
The last inequality is strict because $\rho_j\le x<t$.
Induction on $j$ proves the claim.
\end{proof}

\section{A coefficient bound independent of the number of slices}\label{si:lambda}
\begin{proof}[Remaining estimate in the proof of Proposition~\ref{prop:gain}]
Fix $x,z$, retain $s=z/x$, and vary $M$, with $t=M+x$.
We reduce the coefficient bound to the case $M=1$ by showing that
$L_{d-1,M}$ increases with $M$. Direct subtraction gives
\[
 L_{d-1,M+1}-L_{d-1,M}
 =tT_{d-1}(t+1)-(t+d)T_{d-1}(t),
\]
where $T_{d-1}(v)=v^{d-1}(sv-1)$. Since $sx=z>1$,
\[
 \frac{T_{d-1}(t+1)}{T_{d-1}(t)}
 =\left(1+\frac1t\right)^{d-1}
  \left(1+\frac{s}{st-1}\right)
 >\left(1+\frac1t\right)^d>1+\frac dt.
\]
Thus $L_{d-1,M}$ is strictly increasing in $M$. Dividing by the
positive constant $x^{d-1}(z-1)$ gives
\[
 \Lambda_{d,M}(x,z)\ge\Lambda_{d,1}(x,z).
\]
Setting $M=1$ gives~\eqref{eq:lambda}. Moreover,
\[
 \Lambda_{d,1}(x,z)
 >x\left(1+\frac1x\right)^d-(d+1)>0.
\]
The minimum of $(x+1)^d/x^{d-1}$ occurs at $x=d-1$ and equals
$d^d/(d-1)^{d-1}>d+1$.

Therefore $\Lambda_{d,M}(x,z)\ge\Lambda_{d,1}(x,z)>0$. Finally,
Theorem~\ref{thm:anisotropy} and~\eqref{eq:sharp-gain} give
\eqref{eq:scalar-bound}.
\end{proof}

\section{Falling-factorial estimates}\label{si:factorials}
\begin{proof}[Proof of Lemma~\ref{lem:falling-envelopes}]
For~\textup{(i)}, let $m=\lfloor y\rfloor$. If $y$ is an integer,
the product is zero. Otherwise, bound the factors on each side of $y$
separately:
\[
 |\fall{y}{N}|\le(m+1)!(N-1-m)!\le(N-1)!,
\]
where the last inequality is equivalent to
$\binom{N-1}{m}\ge m+1$.

For~\textup{(ii)}, first suppose $d\ge4$. Each term of $F_d'(v)$ is a
product of $d-1$ nonnegative factors. Its arithmetic mean is largest
when the omitted factor is $v-(d-1)$. This mean is
$v-(d-2)/2\le v-1$. The arithmetic--geometric mean inequality bounds
each term by $(v-1)^{d-1}$. There are $d$ terms. For $d=3$, use
\[
 F_3'(v)=3v^2-6v+2\le3(v-1)^2.
\]

For~\textup{(iii)}, we bound the first interval directly and prove
monotonicity on the remaining two intervals. First take $0\le x\le A-1$.
Part~\textup{(i)} and $d!\le A^d$ give
\[
 p_{d+1}(x+1)\le(A-1)^{d+1}+d!\le A^{d+1}=p_{d+1}(A+1).
\]
Indeed, $A!\le A^{d-2}$ and $d\le A^2$ give $d!\le A^d$,
while $(A-1)^{d+1}\le(A-1)A^d$.
On $A-1\le x\le A$, write
\[
 \fall{x+1}{d+1}=Q(x)(x-A),
 \qquad Q(x)=\prod_{j=-1}^{A-1}(x-j).
\]
Here $Q,Q'\ge0$. The $d$ factors of $Q$ have arithmetic mean
$x-(d-3)/2\le x$, so $Q(x)\le x^d$. Since
\[
 \frac{d}{dx}\fall{x+1}{d+1}=Q(x)+(x-A)Q'(x)\le Q(x),
\]
the derivative of $x\mapsto p_{d+1}(x+1)$ satisfies
\[
 p_{d+1}'(x+1)=(d+1)x^d-\frac{d}{dx}\fall{x+1}{d+1}
 \ge d x^d\ge0.
\]
Finally, let $A\le x\le d$. All factors are nonnegative. The derivative
of $\fall{x+1}{d+1}$ has $d+1$ terms, each a product of $d$ factors.
Their arithmetic mean is at most $x-(d-3)/2\le x$. The derivative
is therefore at most $(d+1)x^d$, so the function
$x\mapsto p_{d+1}(x+1)$ is also nondecreasing on this interval.
The three intervals prove~\eqref{eq:pstar-envelope}.
\end{proof}

\section{The one-slice core estimate}\label{si:one-core}
\begin{proof}[Proof of Lemma~\ref{lem:one-slice-core}]
Substituting $t=x+1$ and $s=z/x$ into~\eqref{eq:C}
gives~\eqref{eq:C1-formula}. First suppose $1<z\le A$.
Since
\[
 w-x=\frac{(z-x)(x+1)}x\ge0,
\]
the term $x(w^d-x^d)$ in the alternative expression
\[
 C_1=x(w^d-x^d)+\fall{x+1}{d+1}-(d+1)F_d(z)
\]
is nonnegative.  Lemma~\ref{lem:falling-envelopes}\textup{(i)} gives
\[
 |\fall{x+1}{d+1}|\le d!,
 \qquad |F_d(z)|\le A!.
\]
For $d\ge4$, since $d-z\ge1$,
\begin{align*}
 C_1+(d-z)A^d
 &\ge A^d-d!-(d+1)A!\\
 &=A^d-(2d+1)A!\ge0.
\end{align*}
The last inequality follows from $A!\le A^{d-2}$ and
$A^2\ge2d+1$.  When $d=3$, $F_3(z)\le0$ and
\[
 \fall{x+1}{4}+1=(x^2-x-1)^2\ge0,
\]
so $C_1+8(3-z)\ge-1+8>0$.

It remains to consider $A<z<d$.  Lemma~\ref{lem:falling-envelopes}
gives $p_{d+1}(x+1)\le p_{d+1}(z+1)$. For fixed $z$,
\[
 \frac{d}{dx}\left[x\left(z-1+\frac zx\right)^d\right]
 =w^{d-1}\left(z-1-\frac{(d-1)z}{x}\right)<0
 \qquad(0<x\le z<d).
\]
Consequently $x(z-1+z/x)^d\ge z^{d+1}$, and
\begin{align*}
 C_1
 &\ge z^{d+1}-p_{d+1}(z+1)-(d+1)F_d(z)\\
 &=(z-d)F_d(z).
\end{align*}
Here $F_d(z)\ge0$. To prove $F_d(z)\le A^d$, pair the factors $z$ and
$z-A$. Their product is at most $d\le A^2$. Each of the remaining
$d-2$ factors is at most $A$. Since $z-d<0$, this upper bound for
$F_d(z)$ gives the lower bound~\eqref{eq:core-lower}.
\end{proof}

\section{The one-slice lower bound}\label{si:B1}
\begin{proof}[Proof of Proposition~\ref{prop:B1}]
Suppose first that $1<z<d$, and put $A=d-1$.
We bound $E_d(z)$ and $\Lambda_{d,1}(x,z)$ from below to offset the
possible negative core contribution. Logarithmic differentiation of
the first branch in~\eqref{eq:Phi}
gives
\[
 \frac{\Phi_d'(z)}{\Phi_d(z)}
 =\frac{(d-2)(z-d)}{(1+(d-2)z)(z-1)}\le0
 \qquad(1<z<d).
\]
Since $\Phi_d(d)=A^d$ and $d-z=A-(z-1)$, the difference-of-powers
identity gives
\begin{equation}\label{eq:E-lower}
 E_d(z)\ge A^d-(z-1)^d\ge (d-z)A^{d-1}.
\end{equation}
Moreover, the $M=1$ formula in Proposition~\ref{prop:gain} gives
\begin{align}
 \Lambda_{d,1}(x,z)
 &>x\left(1+\frac1x\right)^d-(d+1)\notag\\
 &\ge\frac{d^d}{A^A}-(d+1)\ge A.
 \label{eq:Lambda-A}
\end{align}
Indeed, $(x+1)^d/x^{d-1}$ is minimized at $x=A$, and
$(1+1/A)^A\ge2$.

Lemma~\ref{lem:one-slice-core} and
\eqref{eq:E-lower}--\eqref{eq:Lambda-A} yield
\[
 B_{d,1}>-(d-z)A^d+A(d-z)A^{d-1}=0.
\]
Thus~\eqref{eq:B1-positive} is strict whenever $1<z<d$.

Now assume $z\ge d$. Then $E_d(z)=0$, and we need $C_1\ge0$.
Fix $x$ and write $z=sx$. Regard
\[
 C_1(x,s)=x[s(x+1)-1]^d-p_{d+1}(x+1)-(d+1)F_d(sx)
\]
as a function of $s$.  Whenever $sx\ge d$, Lemma
\ref{lem:falling-envelopes}\textup{(ii)} gives
\begin{align*}
 \frac{\partial C_1}{\partial s}
 &>dx(sx-1)^{d-1}
 \left[(x+1)\left(1+\frac1x\right)^{d-1}-(d+1)\right]>0.
\end{align*}
The last bracket is positive because $(x+1)^d/x^{d-1}$ has minimum
$d^d/A^A>d+1$.

If $x\ge d$, then $s\ge1$, and the comparison point $s=1$ lies
in the region $sx\ge d$. Monotonicity in $s$ gives
\[
 C_1(x,s)\ge C_1(x,1)
 =\fall{x}{d+1}\ge0.
\]
The first inequality is strict for $s>1$. The falling factorial
vanishes at $x=d$ and is positive for $x>d$. Thus equality in this
case holds exactly when $x=d$ and $s=1$.

If $x<d$, compare instead with $s=d/x$, the point where $sx=d$.
By Lemma~\ref{lem:falling-envelopes}\textup{(iii)},
\[
 p_{d+1}(x+1)\le p_{d+1}(d+1)=d^{d+1}-(d+1)!.
\]
For $0<x<d$,
\[
 \frac{d}{dx}\left[x\left(A+\frac dx\right)^d\right]
 =(d-1)\left(A+\frac dx\right)^{d-1}
       \left(1-\frac dx\right)<0.
\]
Comparison with $x=d$ gives $x(A+d/x)^d>d^{d+1}$.
It follows that
\[
 C_1(x,d/x)>d^{d+1}
 -\{d^{d+1}-(d+1)!\}-(d+1)d!=0.
\]
Together with the case $x\ge d$, this shows that equality holds
exactly when $x=z=d$.
\end{proof}

\section{The auxiliary finite-difference function}\label{si:h-shape}
\begin{proof}[Proof of Lemma~\ref{lem:h-shape}]
By Lemma~\ref{lem:falling-envelopes}\textup{(ii)}, for $v\ge A$,
\[
 h_d'(v)=(d+1)\{v^d-(v-1)^d-F_d'(v)\}\ge0,
\]
because $v^d-(v-1)^d\ge d(v-1)^{d-1}$.

For~\eqref{eq:h-cap}, write
$P(v)=v^{d+1}-(v-1)^{d+1}$, so $h_d=P-(d+1)F_d$ and
$h_d(A)=P(A)$.  If $F_d(v)\ge0$, monotonicity of $P$ gives the result.
Suppose $F_d(v)<0$. It suffices to prove $P(A)-P(v)\ge(d+1)|F_d(v)|$. For $d\ge4$ one has
\begin{equation}\label{eq:factorial-step}
 (A-1)^d-(A-2)^d\ge A!.
\end{equation}
For $d=4$, the inequality follows by substitution. For $d\ge5$, the
left side is at least $d(d-3)^{d-1}$. Now use
$A!\le A(A-1)(d-3)^{d-3}$ and $d(d-3)^2\ge A(A-1)$.
If $v\le A-1$, integration of $P'$ over $[A-1,A]$ and
Lemma~\ref{lem:falling-envelopes}\textup{(i)} give
\begin{align*}
 P(A)-P(v)
 &\ge P(A)-P(A-1)\\
 &\ge(d+1)\{(A-1)^d-(A-2)^d\}\\
 &\ge(d+1)A!\ge(d+1)|F_d(v)|.
 \end{align*}
If $A-1\le v\le A$, factor out $v-A$ from $F_d(v)$. The absolute
values of the remaining factors have product at most $A!$, so
$|F_d(v)|\le(A-v)A!$. The same integral estimate on $[v,A]$ gives
$P(A)-P(v)\ge(d+1)(A-v)A!$.
This proves~\eqref{eq:h-cap} for $d\ge4$. For $d=3$, use
$h_3(v)=6v^2-4v-1$, which is increasing for $v>1$.
\end{proof}

\section{The slice increment}\label{si:B-monotone}
\begin{proof}[Proof of Proposition~\ref{prop:B-monotone}]
Fix $d,x,z$, so $s=z/x$ is fixed. In passing from $M$ to $M+1$,
$t=x+M$ increases by one.
We compute the increments of the core and its coefficient separately,
then use the coefficient term to control any decrease in the core.
Use $T_{d-1}(v)=v^{d-1}(sv-1)$ from the coefficient comparison.
Direct subtraction in~\eqref{eq:C} and~\eqref{eq:L} gives
\begin{align}
 I_C:=C_{M+1}-C_M
={}&t[s(t+1)-1]^d-(t-1)(st-1)^d\notag\\
 &-\{t^{d+1}-(t-1)^{d+1}\}
 +(d+1)F_d(t)-(d+1)F_d(st),
 \label{eq:IC}\\
 I_L:=L_{d-1,M+1}-L_{d-1,M}
={}&tT_{d-1}(t+1)-(t+d)T_{d-1}(t).
 \label{eq:IL}
\end{align}
Thus
\begin{equation}\label{eq:B-increment}
 B_{d,M+1}-B_{d,M}
 =I_C+\frac{E_d(z)}{x^{d-1}(z-1)}I_L.
\end{equation}

Binomial expansion and~\eqref{eq:h-def} give
\begin{equation}\label{eq:IC-envelope}
\begin{aligned}
 I_C={}&h_d(st)-h_d(t)\\
 &+st\sum_{j=2}^{d}\binom dj(st-1)^{d-j}(s^{j-1}-1)
 \ge h_d(st)-h_d(t).
\end{aligned}
\end{equation}
Here $t>1$ and $s\ge1$; the inequality is strict if $s>1$.

First assume $1<z<d$.
Lemma~\ref{lem:h-shape} now shows that $I_C\ge0$ unless
\begin{equation}\label{eq:bad-strip}
 1<t\le st<A=d-1.
\end{equation}
If $t\ge A$, both arguments lie in the interval where $h_d$ is
nondecreasing. If $t<A\le st$, then
$h_d(st)\ge h_d(A)\ge h_d(t)$. It remains to consider
\eqref{eq:bad-strip}. Since $v^{d+1}-(v-1)^{d+1}$ is increasing for $v>1$,
\begin{align*}
 I_C&\ge h_d(st)-h_d(t)\\
 &\ge-(d+1)\{|F_d(st)|+|F_d(t)|\}.
\end{align*}
Lemma~\ref{lem:falling-envelopes}\textup{(i)} therefore gives the uniform
estimate
\begin{equation}\label{eq:IC-negative}
 I_C\ge-2(d+1)A!.
\end{equation}

On the other hand,
\[
 \frac{T_{d-1}(t+1)}{T_{d-1}(t)}
 =\left(1+\frac1t\right)^{d-1}
  \left(1+\frac{s}{st-1}\right)
 >\left(1+\frac1t\right)^d,
\]
so
\begin{equation}\label{eq:IL-lower}
 I_L>\binom d2 t^{d-2}(st-1)>0.
\end{equation}
If $I_C<0$, then~\eqref{eq:bad-strip} holds. Since
$st=z+sM\ge z+M>2$, we have $d\ge4$ and $z+M<A$.
Thus $A-M>z>0$, and the following denominators are positive.
The middle comparison uses the decrease of $(d-z)/z$ in $z$;
the last uses $M\ge1$:
\begin{equation}\label{eq:czx}
 \frac{d-z}{x}\ge\frac{d-z}{z}
 >\frac{M+1}{d-1-M}\ge\frac2{d-2}.
\end{equation}
Equations~\eqref{eq:E-lower}, \eqref{eq:IL-lower}, and
$t>x$, $st>z$ imply
\begin{align*}
 \frac{E_d(z)}{x^{d-1}(z-1)}I_L
 &>\binom d2 A^{d-1}\frac{d-z}{x}\\
 &>\frac{dA^d}{d-2}
 \ge2(d+1)A!.
\end{align*}
For the last inequality, use $A!\le A^{d-2}$ and
\[
 dA^2-2(d+1)(d-2)=d(d-2)^2-d+4>0\qquad(d\ge4).
\]
Combining this with~\eqref{eq:IC-negative} gives
a positive increment. If $I_C\ge0$, the increment is positive because
$E_d(z)>0$ and $I_L>0$. This proves strict inequality for $z<d$.

Finally, if $z\ge d$, then $E_d(z)=0$ and $st=z+sM\ge d+1>A$.
According as $t\ge A$ or $t<A$, apply the monotonicity or the cap in
Lemma~\ref{lem:h-shape}; together with~\eqref{eq:IC-envelope},
these give $I_C\ge0$.
The inequality is strict when $s>1$.
When $s=1$, the same identity gives $I_C=0$.
\end{proof}

\section{A negative core on an admissible simplex}\label{si:negative-core}
\begin{example}\label{ex:negative-core}
Let $n=4$, $d=3$, and take
\[
 (a_1,a_2,a_3,a_4)=\left(7,7,\frac72,\frac72\right).
\]
Then $T=3/2$, so $M=1$, and the normalized parameters are
\[
 t=\frac72,\qquad x=\frac52,\qquad s=1,\qquad
 (r_1,r_2,r_3)=(2,2,1).
\]
The last-slice reciprocal sum is $2/x=4/5<1$. Direct substitution in
\eqref{eq:C} and~\eqref{eq:delta} gives
\[
 C_M=-\frac{15}{16}<0,\qquad \Delta_M=\frac{205}{2}>0.
\]
Here the symmetric part is negative, while the anisotropy contribution
makes the residual positive.
\end{example}

\section{Hermite eigenfunctions and sublevel volumes}
\label{si:spectral-background}

For the operator~\eqref{eq:oscillator}, define the Hermite
polynomials by
\[
 H_m(y)=(-1)^m e^{y^2}\frac{d^m}{dy^m}(e^{-y^2}),
 \qquad m\in\Z_{\ge0}.
\]
They satisfy the orthogonality relation
\[
 \int_{\R}e^{-y^2}H_m(y)H_\ell(y)\,dy
 =2^m m!\sqrt{\pi}\,\delta_{m\ell}.
\]
Thus the normalized one-dimensional eigenfunctions are
\[
 \varphi_{m,\omega}(x)
 =\frac{(\omega/\pi)^{1/4}}{\sqrt{2^m m!}}\,
   H_m(\sqrt{\omega}\,x)e^{-\omega x^2/2},
 \qquad m\in\Z_{\ge0}.
\]
The Hermite differential equation gives
\[
 \frac12\left(-\frac{d^2}{dx^2}+\omega^2x^2\right)
 \varphi_{m,\omega}
 =\omega\left(m+\frac12\right)\varphi_{m,\omega}.
\]
The products
\[
 \psi_{\boldsymbol m}(x)
 =\prod_{i=1}^n\varphi_{m_i,\omega_i}(x_i)
\]
form an orthonormal eigenbasis of $H_{\boldsymbol\omega}$, with eigenvalues
$E_{\boldsymbol m}=\sum_i\omega_i(m_i+1/2)$.
The lowest eigenvalue $E_0$ occurs only at $\boldsymbol m=0$ and is simple.
The multiplicity of any eigenvalue is the number of multi-indices giving
that value.

For example, when $n=2$ and $\omega_1=\omega_2=\omega$,
the eigenvalue $\omega(k+1)$ has multiplicity $k+1$ for $k\in\Z_{\ge0}$.
At $E=3\omega$, the eigenvalues $\omega$, $2\omega$, and $3\omega$
therefore give
\[
 \mathcal N_{(\omega,\omega)}(3\omega)=1+2+3=6,
 \qquad
 \mathcal R_{(\omega,\omega)}(3\omega)
 =2\omega+2\omega=4\omega.
\]
The eigenvalue $3\omega$ contributes $3$ to $\mathcal N$ and zero
to $\mathcal R$.

For the quadratic form
\[
 h_{\boldsymbol\omega}(x,\xi)
 =\frac12\sum_i(\xi_i^2+\omega_i^2x_i^2),
 \qquad (x,\xi)\in\R^n\times\R^n,
\]
set $y_i=\omega_i x_i$.
Then $dx\,d\xi=(\prod_i\omega_i)^{-1}dy\,d\xi$, and
$\{h_{\boldsymbol\omega}\le E\}$ becomes a ball of radius $\sqrt{2E}$
in $\R^{2n}$. Hence
\[
 \frac{\operatorname{Vol}_{2n}\{h_{\boldsymbol\omega}\le E\}}
      {(2\pi)^n}
 =\frac{\pi^n(2E)^n}
        {(2\pi)^n n!\prod_i\omega_i}
 =\frac{E^n}{n!\prod_i\omega_i},
 \qquad E\ge0.
\]
The right-hand side is $\mathcal W_{\boldsymbol\omega}(E)$ defined
in the main text.

\section{Comparison of the energy bounds at high energies}
\label{si:energy-comparison}
Retain the notation of Corollary~\ref{cor:energy-interpolation}, with
$n\ge2$, and fix $h$ and the integers $q_i$. Put
$U(E)=(1-\theta)V_M+\theta V_{M+1}$, where
$E=E_0+h(M+\theta)$. Expanding~\eqref{eq:grid-energy-polynomial} gives
\[
 U(E)=\frac{E^{n+1}}{(n+1)!\prod_i\omega_i}
       +c_{\boldsymbol q}E^n+O(E^{n-1}),
\]
where
\[
 c_{\boldsymbol q}
 =\frac{h^{1-n}}{2(n+1)!}
 \left[\frac{(n-1)\sigma-2}{\prod_iq_i}
       -\frac{(n+1)(n-2)}{q_n^n}\right].
\]
Linear interpolation over intervals of fixed length $h$ changes only
the $O(E^{n-1})$ term, since the second derivative of the polynomial
has degree $n-1$. The remainder is uniform for $0\le\theta<1$.
If $\boldsymbol q\ne(1,\ldots,1)$, then $\sigma>n$ and
$\prod_iq_i\le q_n^n$, so
\[
 c_{\boldsymbol q}\ge
 \frac{h^{1-n}(n-1)(\sigma-n)}{2(n+1)!q_n^n}>0.
\]
For arbitrary positive frequencies and $n\ge3$, let $c>0$ be the
coefficient of $E^{n-1}$ in
$\mathcal B_{n,\boldsymbol\omega}(E)-\mathcal W_{\boldsymbol\omega}(E)$,
computed in the main text, Section~\ref{sec:oscillator-app}. Then
\[
 \int_{E_0}^E\mathcal B_{n,\boldsymbol\omega}(s)\,ds
 =\frac{E^{n+1}}{(n+1)!\prod_i\omega_i}
   +\frac cn E^n+O(E^{n-1}).
\]
Thus the phase-space bound~\eqref{eq:phase-energy-bound} is strictly
smaller than the integrated bound for all sufficiently large $E$.
It is also strictly smaller than the interpolated bound at sufficiently
high energies whenever $\boldsymbol q\ne(1,\ldots,1)$.

\section{Explicit subfamilies}\label{sec:subfamilies}

We give direct counting proofs for three families of simplices.
They also show the equality condition in the \MainTheoremRef{}.

\begin{proposition}\label{thm:equal}
Let $n\ge2$ and $a\ge n$.  Then
\[
 P_n(a,\ldots,a)=\binom{\lfloor a\rfloor}{n}
\]
and
\[
 n!P_n(a,\ldots,a)\le\fall{a}{n}=g_n(a,\ldots,a).
\]
Equality holds if and only if $a$ is an integer.
\end{proposition}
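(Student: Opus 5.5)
The plan is to count directly and then compare falling factorials. Three steps are needed: the count, the identification of $g_n$, and the comparison with its equality case.

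\emph{The count.} Put $N=\lfloor a\rfloor\ge n$. A positive integer point $x$ lies in $T(a,\ldots,a)$ exactly when $\sum_i x_i\le a$. Since the left side is an integer, this is the same as $\sum_i x_i\le N$. We count positive integer vectors with $\sum_i x_i\le N$ by adding a slack variable $x_{n+1}=N+1-\sum_i x_i\ge1$. This turns the count into the number of compositions of $N+1$ into $n+1$ positive parts, which is $\binom{N}{n}$ by stars and bars. Hence $n!P_n(a,\ldots,a)=\fall{N}{n}$.

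\emph{Identifying $g_n$.} When all intercepts equal $a$, the definitions \eqref{eq:correction} and \eqref{eq:yau-polynomial} give
\[
 g_n(a,\ldots,a)=(a-1)^n-p_n(a)=\fall{a}{n}.
\]
This is immediate from the definitions.

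\emph{The comparison.} It remains to show $\fall{N}{n}\le\fall{a}{n}$ for $n\le N\le a$, with equality only when $a=N$. Write
\[
 \fall{v}{n}=\prod_{j=0}^{n-1}(v-j).
\]
For $v\ge N\ge n$, every factor satisfies $v-j\ge N-n+1\ge1>0$, and each factor is strictly increasing in $v$. So $v\mapsto\fall{v}{n}$ is strictly increasing on $[N,\infty)$. This gives $\fall{N}{n}\le\fall{a}{n}$, and equality forces $a=N$, that is, $a$ is an integer. Conversely, if $a$ is an integer then $N=a$ and the two sides agree.

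The only point needing care is positivity of the factors on the whole interval $[N,a]$. This is where the hypothesis $a\ge n$, and hence $N\ge n$, enters. Otherwise there is no real obstacle. The case $n=2$ is included, since the argument never uses the main theorem.
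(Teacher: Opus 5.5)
Your proposal is correct and follows the paper's proof. You reduce to $\sum_i x_i\le\lfloor a\rfloor$, count to get $\binom{\lfloor a\rfloor}{n}$ (with a slack variable, where the paper sums $\binom{q-1}{n-1}$ over $q$), and compare falling factorials; your explicit argument that $v\mapsto\fall{v}{n}$ is strictly increasing on $[\lfloor a\rfloor,\infty)$ justifies the final comparison and its equality case, which the paper only asserts.
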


\begin{proof}
Set $m=\lfloor a\rfloor$. The lattice condition becomes
$x_1+\cdots+x_n\le m$. Counting the positive integer solutions for each
possible sum gives
\[
 P_n=\sum_{q=n}^{m}\binom{q-1}{n-1}=\binom{m}{n}.
\]
Thus $n!P_n=\fall{m}{n}\le\fall{a}{n}$, with equality precisely when
$a=m$.
\end{proof}

\begin{proposition}\label{thm:first-chamber}
Let
\[
 a_1\ge\cdots\ge a_n=t>1,
 \qquad m=\lfloor t\rfloor\ge n,
\]
and assume
\begin{equation}\label{eq:first-chamber}
 a_1<\frac{mt}{t-1}.
\end{equation}
Then $P_n(a_1,\ldots,a_n)=\binom{m}{n}$, and the Yau bound holds.
Equality holds if and only if $a_1=\cdots=a_n=m\in\Z$.
\end{proposition}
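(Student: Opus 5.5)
First I will show that the count does not depend on the larger intercepts, so that $P_n(a_1,\ldots,a_n)=\binom{m}{n}$. Then the bound follows by comparing $g_n$ with its value at equal intercepts.

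\emph{Step 1: the count.} Every positive integer point of $T(a_1,\ldots,a_n)$ satisfies $\sum_i x_i/a_i\le1$. Since $a_i\ge t$, we get $\sum_i x_i/a_1\le1$, and hence $\sum_i x_i\le a_1<mt/(t-1)$. Conversely, every positive integer point with $\sum_i x_i\le m$ lies in the simplex. Indeed, $\sum_i x_i/a_i\le\sum_i x_i/t\le m/t\le1$. So $P_n\ge\binom mn$, and it remains to exclude points with $S=\sum_i x_i\ge m+1$. For such a point, use $x_n\ge1$ and $a_i\le a_1$ for $i<n$:
\[
 \sum_i\frac{x_i}{a_i}\ge\frac{S-x_n}{a_1}+\frac{x_n}{t}.
\]
This lower bound is increasing in $x_n$ because $a_1\ge t$, so it is at least $(S-1)/a_1+1/t\ge m/a_1+1/t$. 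The hypothesis \eqref{eq:first-chamber} is exactly $m/a_1>1-1/t$, so the sum exceeds $1$ and the point is excluded. This proves $P_n=\binom mn$, by the counting in Proposition~\ref{thm:equal}.

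\emph{Step 2: the bound.} By Step 1, $n!P_n=\fall{m}{n}$. The term $p_n(a_n)=p_n(t)$ involves only $t$. Since $a_i\ge t>1$, we have $\prod_i(a_i-1)\ge(t-1)^n$. Therefore
\[
 g_n(a_1,\ldots,a_n)\ge(t-1)^n-p_n(t)=\fall{t}{n}.
\]
Since $t\ge m\ge n$, all factors $t-j$ for $0\le j\le n-1$ are positive and increasing in $t$. Hence $\fall tn\ge\fall mn=n!P_n$. This gives the Yau bound directly, without invoking the main theorem, in line with the appendix's purpose.

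\emph{Step 3: equality.} Equality forces equality in both comparisons. The second, $\fall tn=\fall mn$, requires $t=m\in\Z$. The first, $\prod_i(a_i-1)=(t-1)^n$, then requires every $a_i=t$, because each factor is at least $t-1>0$. Conversely, if all $a_i=m$, then \eqref{eq:first-chamber} holds since $m<m^2/(m-1)$, and equality follows from Proposition~\ref{thm:equal}.

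The only delicate point is the exclusion argument in Step 1: one must use $x_n\ge1$ together with the ordering $a_1\ge t$ to obtain the sharp constant $mt/(t-1)$.
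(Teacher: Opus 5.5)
Your proof is correct and follows the paper's argument essentially line by line: the same exclusion bound $\sum_i x_i/a_i\ge (S-x_n)/a_1+x_n/t\ge m/a_1+1/t>1$ for $S\ge m+1$, and the same chain $g_n\ge\fall{t}{n}\ge\fall{m}{n}=n!\binom mn$, with equality forced termwise. Your opening remark $\sum_i x_i\le a_1$ is never used, and it follows from $a_i\le a_1$, not from $a_i\ge t$.
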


\begin{proof}
Let $S=x_1+\cdots+x_n$.  If $S\le m$, then
\[
 \sum_i\frac{x_i}{a_i}\le\frac{S}{t}\le1.
\]
Conversely, if $S\ge m+1$, then
\[
 \sum_i\frac{x_i}{a_i}
 \ge\frac{S-x_n}{a_1}+\frac{x_n}{t}
 \ge\frac{m}{a_1}+\frac1t>1,
\]
where the second inequality uses $S\ge m+1$, $x_n\ge1$, and
$a_1\ge t$, while the last is equivalent to~\eqref{eq:first-chamber}.
Thus the counted points are exactly the positive integer vectors whose
coordinates sum to at most $m$. Also,
\[
 g_n(a_1,\ldots,a_n)
 \ge\fall{t}{n}\ge\fall{m}{n}=n!\binom{m}{n}.
\]
Equality in both inequalities holds exactly when
$a_1=\cdots=a_n=m$.
\end{proof}

\begin{proposition}\label{thm:one-axis}
Let $n\ge3$, let $q\ge n$ be an integer, and let $b\ge q$ be real.  Then
the Yau bound holds for
\[
 (a_1,a_2,\ldots,a_n)=(b,q,\ldots,q).
\]
Equality holds if and only if $b=q$.
\end{proposition}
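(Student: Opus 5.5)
The plan is to count the points directly, slicing by the last $n-1$ coordinates, and to compare the result with the explicit form of the Yau polynomial for this family. Since the smallest intercept is $a_n=q$, the definitions \eqref{eq:correction} and \eqref{eq:yau-polynomial} give
\[
 g_n(b,q,\ldots,q)=(b-1)(q-1)^{n-1}-(q-1)^n+\fall{q}{n}
 =(b-q)(q-1)^{n-1}+\fall{q}{n}.
\]
So it suffices to show $n!P_n\le(b-q)(q-1)^{n-1}+\fall{q}{n}$, with equality only when $b=q$. The case $b=q$ is Proposition~\ref{thm:equal}, but the argument below covers it uniformly.

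Put $S=x_2+\cdots+x_n$. The defining inequality reads $x_1/b+S/q\le1$, so $S$ ranges over the integers $n-1\le S\le q-1$. For fixed $S$ there are $\binom{S-1}{n-2}$ choices of $(x_2,\ldots,x_n)$, and $x_1$ ranges over $1,\ldots,\lfloor b(q-S)/q\rfloor$; note $b(q-S)/q\ge q-S\ge1$, since $b\ge q$. Hence
\[
 P_n(b,q,\ldots,q)=\sum_{S=n-1}^{q-1}\binom{S-1}{n-2}
 \left\lfloor\frac{b(q-S)}{q}\right\rfloor .
\]
The key step is to bound the floor by writing
\[
 \left\lfloor\frac{b(q-S)}q\right\rfloor
 \le(q-S)+\frac{(b-q)(q-S)}q .
\]
The sum $\sum_S\binom{S-1}{n-2}(q-S)$ then appears twice. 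It equals $\binom qn$, as the count in Proposition~\ref{thm:equal} shows, since it is exactly the case $b=q$. This gives
\[
 n!P_n\le\fall{q}{n}+\frac{b-q}{q}\,\fall{q}{n}
 =\fall{q}{n}+(b-q)(q-1)(q-2)\cdots(q-n+1).
\]

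It remains to compare $(q-1)\cdots(q-n+1)$ with $(q-1)^{n-1}$. Each factor $q-j$ is at most $q-1$. For $n\ge3$ the factor $q-2$ is strictly smaller than $q-1$, and all factors are positive because $q\ge n$. So the product is strictly less than $(q-1)^{n-1}$, and the Yau bound follows.

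For equality, if $b>q$ the last comparison is strict, since it is multiplied by $b-q>0$. Hence equality forces $b=q$. Conversely, when $b=q$ every floor is exact, and the equality case of Proposition~\ref{thm:equal} applies. No step is a genuine obstacle here. The only point needing care is recognizing the weighted sum as $\binom qn$, which is where the integrality of $q$ is used.
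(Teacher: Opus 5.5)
Your proof is correct and follows essentially the same route as the paper. Both slice by $S=x_2+\cdots+x_n$, bound the number of choices of $x_1$ by $b(q-S)/q$ to get $n!P_n\le (b-q)(q-1)(q-2)\cdots(q-n+1)+\fall{q}{n}$, and compare that product with $(q-1)^{n-1}$. The only difference is minor: you obtain $\sum_S\binom{S-1}{n-2}(q-S)=\binom qn$ by setting $b=q$ in your exact floor formula and invoking Proposition~\ref{thm:equal}, whereas the paper proves this identity directly by interchanging sums and applying binomial summation.
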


\begin{proof}
Put $s=x_2+\cdots+x_n$. For fixed $s$, there are
$\binom{s-1}{n-2}$ positive choices for $(x_2,\ldots,x_n)$.
There are at most $\frac bq(q-s)$ choices for $x_1$. Hence
\begin{align*}
 P_n(b,q,\ldots,q)
 &\le\frac bq\sum_{s=n-1}^{q-1}
 (q-s)\binom{s-1}{n-2}\\
 &=\frac bq\binom{q}{n}.
\end{align*}
The last identity follows by writing $q-s=\sum_{k=s}^{q-1}1$,
interchanging the sums, and applying the binomial summation formula twice.
Thus
\[
 n!P_n\le b\fall{q-1}{n-1}.
\]
On the other hand,
\begin{align*}
 g_n(b,q,\ldots,q)
 &=(b-q)(q-1)^{n-1}+\fall{q}{n},\\
 g_n-b\fall{q-1}{n-1}
 &=(b-q)\left((q-1)^{n-1}-\fall{q-1}{n-1}\right)\ge0.
\end{align*}
The expression in brackets is positive for $n\ge3$. Equality therefore
holds exactly when $b=q$.
\end{proof}

\section{Boundary terms in slice estimates}\label{sec:audit}

The next examples show that $g_d$ and the final falling-factorial term
can be negative on empty slices. They explain the role of the last
nonempty-slice index~\eqref{eq:M}.

\begin{example}\label{ex:41}
Take $n=4$ and $a_1=\cdots=a_4=41/10$. The slice $x_4=3$ has intercepts
\[
 \left(\frac{11}{10},\frac{11}{10},\frac{11}{10}\right).
\]
It is empty, since $3/(11/10)=30/11>1$. However,
\[
 g_3\left(\frac{11}{10},\frac{11}{10},\frac{11}{10}\right)
 =\fall{11/10}{3}=-\frac{99}{1000}.
\]
Here $P_3=0$, so $3!P_3>g_3$ on this slice. The nonempty-slice condition
excludes this case from the induction.
\end{example}

We next examine the finite difference between two successive slice
bounds. Fix $n\ge3$ and the ratios $r_1\ge\cdots\ge r_{n-1}\ge1$,
and let $u>0$ vary. Put
\[
 R(u)=\prod_{i=1}^{n-1}(r_i u-1).
\]
Using the polynomial extension of $g_n$, define
\begin{align*}
 D_n(u;\mathbf r)={}&
 g_n(r_1(u+1),\ldots,r_{n-1}(u+1),u+1)\\
 &-g_n(r_1u,\ldots,r_{n-1}u,u)
 -n g_{n-1}(r_1u,\ldots,r_{n-1}u).
\end{align*}
Direct expansion gives
\begin{align}
 D_n(u;\mathbf r)
={}&uR(u+1)-(u+n-1)R(u)-\bigl(u^n-(u-1)^n\bigr)\notag\\
&+n\fall{u}{n-1}+n(r_{n-1}u-1)^{n-1}
 -n\fall{r_{n-1}u}{n-1}.
\label{eq:interior-residual}
\end{align}
If $r_\ell u>1$ for every $1\le\ell\le n-1$, then, for
$1\le i\le n-2$,
\[
 \frac{\partial D_n}{\partial r_i}
 =u\left[(u+1)\prod_{\ell\ne i}(r_\ell(u+1)-1)
 -(u+n-1)\prod_{\ell\ne i}(r_\ell u-1)\right]>0.
\]
Each ratio of corresponding factors in the two products exceeds $1+1/u$, and the binomial expansion
gives $(u+1)(1+1/u)^{n-2}>u+n-1$.
For $n=4$, setting all three ratios equal to $s$ gives the exact
factorization
\[
 D_4(u;s,s,s)
 =u(s-1)\left(3s^2u+s^2+6su-2s+6u-4\right)\ge0
\]
for $u\ge1$ and $s\ge1$. Since $D_4$ increases with the first two
ratios, this gives $D_4(u;\mathbf r)\ge0$ for $u\ge1$ and $r_i\ge1$.
The boundary cases follow by continuity. This gives the finite-difference
bound. To apply the induction hypothesis, we also check that the slice
is nonempty.

\begin{example}\label{ex:51}
Take $n=5$ and $a_1=\cdots=a_5=51/10$. The slice $x_5=4$ has four equal
intercepts $11/10$ and is empty.
The final falling factorial of order five is
\[
 \fall{11/10}{5}=-\frac{54549}{100000}<0.
\]
This empty slice gives a negative final term. In the main proof, the
sum ends at the last nonempty slice, with index given by~\eqref{eq:M}.
\end{example}

\paragraph*{Author contributions.}
Z.L., S.S.-T.Y., and H.Z. designed research, performed research, and wrote the paper.
\paragraph*{Competing interests.}
The authors declare no competing interest.
\paragraph*{Data availability.}
All mathematical arguments are provided in this article and its appendices.
No external datasets are used.

\paragraph*{Acknowledgments.}
Yau is supported by the Tsinghua University Education Foundation.

The authors used ChatGPT (OpenAI; GPT-5.6 Sol and GPT-6 Astra in Pro mode), including through
the AI Mathematician (AIM) framework~\cite{AIM}, to assist with language editing,
mathematical exposition, and technical details of the proofs.
The authors reviewed and verified the AI-assisted material and take
full responsibility for the mathematical content of the manuscript.


\begin{thebibliography}{99}
\raggedright

\bibitem{YauZhang2006}
S.~T. Yau and L.~Zhang,
An upper estimate of integral points in real simplices with an
application to singularity theory.
\emph{Math. Res. Lett.} \textbf{13}, 911--921 (2006).
\href{https://doi.org/10.4310/MRL.2006.v13.n6.a6}{doi:10.4310/MRL.2006.v13.n6.a6}.

\bibitem{YauYuanZuo2016}
S.~S.-T. Yau, B.~Yuan, and H.~Zuo,
On the polynomial sharp upper estimate conjecture in
7-dimensional simplex.
\emph{J. Number Theory} \textbf{160}, 254--286 (2016).
\href{https://doi.org/10.1016/j.jnt.2015.08.012}{doi:10.1016/j.jnt.2015.08.012}.

\bibitem{XuYau1992}
Y.-J. Xu and S.~S.-T. Yau,
A sharp estimate of the number of integral points in a tetrahedron.
\emph{J. Reine Angew. Math.} \textbf{423}, 199--219 (1992).
\href{https://doi.org/10.1515/crll.1992.423.199}{doi:10.1515/crll.1992.423.199}.

\bibitem{XuYau1996}
Y.-J. Xu and S.~S.-T. Yau,
A sharp estimate of the number of integral points in a
4-dimensional tetrahedra.
\emph{J. Reine Angew. Math.} \textbf{473}, 1--23 (1996).
\href{https://doi.org/10.1515/crll.1995.473.1}{doi:10.1515/crll.1995.473.1}.

\bibitem{LuoYauZuo2015}
X.~Luo, S.~S.-T. Yau, and H.~Zuo,
A sharp polynomial estimate of positive integral points in a
4-dimensional tetrahedron and a sharp estimate of the Dickman--de Bruijn
function.
\emph{Math. Nachr.} \textbf{288}, 61--75 (2015).
\href{https://doi.org/10.1002/mana.201010081}{doi:10.1002/mana.201010081}.

\bibitem{LinLuoYauZuo2014}
K.-P. Lin, X.~Luo, S.~S.-T. Yau, and H.~Zuo,
On a number-theoretic conjecture on positive integral points in a
5-dimensional tetrahedron and a sharp estimate of the Dickman--de Bruijn
function.
\emph{J. Eur. Math. Soc.} \textbf{16}, 1937--1966 (2014).
\href{https://doi.org/10.4171/JEMS/480}{doi:10.4171/JEMS/480}.

\bibitem{LiangYauZuo2016}
A.~Liang, S.~S.-T. Yau, and H.~Zuo,
A sharp estimate of positive integral points in 6-dimensional
polyhedra and a sharp estimate of smooth numbers.
\emph{Sci. China Math.} \textbf{59}, 425--444 (2016).
\href{https://doi.org/10.1007/s11425-015-5061-x}{doi:10.1007/s11425-015-5061-x}.

\bibitem{LinYangYauZuo2016}
K.-P. Lin, A.~Yang, S.~S.-T. Yau, and H.~Zuo,
On the sharp polynomial upper estimate conjecture in
eight-dimensional simplex.
\emph{Pure Appl. Math. Q.} \textbf{12}, 353--398 (2016).
\href{https://doi.org/10.4310/PAMQ.2016.v12.n3.a2}{doi:10.4310/PAMQ.2016.v12.n3.a2}.

\bibitem{WangYau2007}
X.~Wang and S.~Yau,
On the GLY Conjecture of upper estimate of positive integral points
in real right-angled simplices.
\emph{J. Number Theory} \textbf{122}, 184--210 (2007).
\href{https://doi.org/10.1016/j.jnt.2006.04.007}
{doi:10.1016/j.jnt.2006.04.007}.

\bibitem{FeffermanSeco1995}
C.~L. Fefferman and L.~A. Seco,
The mathematics of large atoms.
\emph{Journ\'ees \'Equations aux D\'eriv\'ees Partielles}, no.~11, 1--12 (1995).
\href{https://doi.org/10.5802/jedp.483}{doi:10.5802/jedp.483}.

\bibitem{CordobaFeffermanSeco1995}
A.~C\'ordoba, C.~L. Fefferman, and L.~A. Seco,
Weyl sums and atomic energy oscillations.
\emph{Rev. Mat. Iberoamericana} \textbf{11}, 165--226 (1995).
\href{https://doi.org/10.4171/RMI/170}{doi:10.4171/RMI/170}.

\bibitem{Larson2019}
S.~Larson,
Maximizing Riesz means of anisotropic harmonic oscillators.
\emph{Ark. Mat.} \textbf{57}, 129--155 (2019).
\href{https://doi.org/10.4310/ARKIV.2019.v57.n1.a8}{doi:10.4310/ARKIV.2019.v57.n1.a8}.

\bibitem{Ennola1969}
V.~Ennola,
On numbers with small prime divisors.
\emph{Ann. Acad. Sci. Fenn. Ser. A I Math.} \textbf{440}, 1--16 (1969).
\href{https://doi.org/10.5186/aasfm.1969.440}{doi:10.5186/aasfm.1969.440}.

\bibitem{Borda2018}
B.~Borda,
Lattice points in algebraic cross-polytopes and simplices.
\emph{Discrete Comput. Geom.} \textbf{60}, 145--169 (2018).
\href{https://doi.org/10.1007/s00454-017-9946-z}{doi:10.1007/s00454-017-9946-z}.

\bibitem{AIM}
Yuanhang Liu, Yanxing Huang, Yanqiao Wang, Peng Li, and Yang Liu,
\emph{AI Mathematician: Towards Fully Automated Frontier Mathematical
Research},
arXiv:2505.22451, 2025.

\end{thebibliography}
\end{document}